\def\west{{\rm(\raisebox{-0.5mm}{\includegraphics[width=3mm]{westline2.eps}}\rm)}}
\def\south{{\rm(\raisebox{-0.5mm}{\includegraphics[width=3mm]{southline.eps}}\rm)}}
\def\east{{\rm(\raisebox{-0.5mm}{\includegraphics[width=3mm]{eastline.eps}}\rm)}}
\def\north{{\rm(\raisebox{-0.5mm}{\includegraphics[width=3mm]{northline.eps}}\rm)}}
\def\sw{{\rm(\raisebox{-0.5mm}{\includegraphics[width=3mm]{swtriang.eps}}\rm)}}
\def\nw{{\rm(\raisebox{-0.5mm}{\includegraphics[width=3mm]{nwtriang.eps}}\rm)}}
\def\se{{\rm(\raisebox{-0.5mm}{\includegraphics[width=3mm]{setriang.eps}}\rm)}}
\def\ne{{\rm(\raisebox{-0.5mm}{\includegraphics[width=3mm]{netriang.eps}}\rm)}}
\def\ns{{\rm(\raisebox{-0.5mm}{\includegraphics[width=3mm]{nslines.eps}}\rm)}}
\def\sn{{\rm(\raisebox{-0.5mm}{\includegraphics[width=3mm]{nslines.eps}}\rm)}} 
\def\ew{{\rm(\raisebox{-0.5mm}{\includegraphics[width=3mm]{ewlines.eps}}\rm)}}
\def\mdiag{{\rm(\raisebox{-0.5mm}{\includegraphics[width=3mm]{mdiag.eps}}\rm)}}
\def\adiag{{\rm(\raisebox{-0.5mm}{\includegraphics[width=3mm]{adiag.eps}}\rm)}}
\def\square{{\rm(\raisebox{-0.5mm}{\includegraphics[width=3mm]{square.eps}}\rm)}}
\begin{document}
\title[The conjugacy problem in the 4-strand braid group]{A fast solution to the conjugacy problem in the 4-strand braid group}
\author{Matthieu Calvez and Bert Wiest}
\address{Matthieu Calvez / Bert Wiest, IRMAR (UMR 6625 du CNRS), Universit\'e de Rennes 1,
Campus de Beaulieu, 35042 Rennes Cedex, France}
\email{matthieu.calvez@univ-rennes1.fr, bertold.wiest@univ-rennes1.fr}


\begin{abstract}
We present an algorithm for solving the conjugacy search problem in the four strand braid group. The computational complexity is cubic with respect to the braid length.
\end{abstract}
\maketitle
\newtheorem{prop}{Proposition}[section]
\newtheorem{propdef}[prop]{Proposition/Definition}
\newtheorem{lem}[prop]{Lemma}
\newtheorem{calcul}[prop]{Calculation}
\newtheorem{thm}[prop]{Theorem}
\newtheorem{defi}[prop]{Definition}
\newtheorem{ex}[prop]{Example}
\newtheorem{qu}[prop]{Question}
\newtheorem{coro}[prop]{Corollary}
\newtheorem{remark}[prop]{Remark}

\section{Introduction}
The \emph{conjugacy problem} is one of the three famous decision problems in groups first formulated by Dehn in the early 20th century. The aim is to decide whether two given elements~$x$ and~$y$ of a group~$G$ are conjugate in~$G$, i.e.\ if there exists an element~$z$ in~$G$ such that  $x=z^{-1}yz$ (which we shall denote $x=y^z$). If so, then an additional problem is to actually search for such a conjugating element~$z$. These two problems are called CDP (conjugacy decision problem) and CSP (conjugacy search problem).


We know since Garside \cite{garside} that CDP and CSP are solvable for the braid groups~$B_n$, meaning that there exists an algorithm for solving these two problems in~$B_n$, $n\geqslant 1$. In fact, the properties of braid groups discovered in~\cite{garside} are now known to hold for a large class of groups, called Garside groups \cite{dehornoy}; this class of groups contains for instance all Artin-Tits groups of spherical type~\cite{BrSa}.


A \emph{Garside group} of finite type~$G$ is equipped with a partial order relation~$\preccurlyeq$, called the \emph{prefix order}, which is invariant under left multiplication and which induces on~$G$ a lattice structure (i.e.\ every pair of elements has a largest common divisor and a least common multiple). Moreover, the positive cone of this relation  $P=\{x\in G\ |\  1\preccurlyeq x\}$ contains a special element~$\Delta$, called the \emph{Garside element}, with the following properties: firstly, conjugation by~$\Delta$ stabilizes~$P$, and secondly, the set of positive divisors of~$\Delta$ with respect to~$\preccurlyeq$ (also called the set of \emph{simple elements}) is finite and generates~$G$. An important property of Garside groups is the existence of a \emph{left normal form}~\cite{dehornoy}. This means that for any element~$x$ of the group there exists a unique decomposition of the form $x=\Delta^px_1\ldots x_r$, where $p\in\mathbb Z$, 
$r\in\mathbb N$, 
and the factors~$x_i$ are simple elements. In this factorization,
the quantity~$\ell(x)=r$ is called the \emph{canonical length} of~$x$.

The braid groups admit in fact two distinct Garside structures (i.e.\ two distinct pairs $(\preccurlyeq,\Delta)$). We shall use both structures. On the one hand, the \emph{classical} Garside structure, stemming from Garside's original article \cite{garside}: for $x,y\in B_n$, $x\preccurlyeq y$ if and only if $x^{-1}y$ can be written as a product of Artin generators $\sigma_i$, all with positive powers; the Garside element is $\Delta=(\sigma_1\ldots\sigma_{n-1})\ldots(\sigma_1\sigma_2)\sigma_1$. On the other hand, the \emph{dual} (or BKL) structure, introduced by Birman, Ko and Lee in~\cite{bkldual}, which we shall describe in detail in Section~\ref{S:Prolegomenes}.


Since Garside, several more and more powerful algorithms for solving CDP and CSP have been proposed \cite{em,gebhardt,gebhardtgm}. We briefly recall that each of the algorithms \cite{em,gebhardt,gebhardtgm} for solving CDP and CSP in a Garside group~$G$ is based on the calculation, for any given $x\in G$, of a finite non-empty subset~$E_x$ of the conjugacy class of~$x$, satisfying $E_x=E_y$ if and only if~$x$ and~$y$ are conjugate. The \emph{Super Summit Sets} ($SSS$) \cite{em}, the \emph{Ultra Summit Sets} ($USS$) \cite{gebhardt} and the \emph{sets of sliding circuits} ($SC$) \cite{gebhardtgm} are three examples of such characteristic subsets. Unfortunately, despite the very high speed (in practice) of the most recent algorithms, the existence of a polynomial bound on the algorithmic complexity (with respect to the length of the input) is still an open problem, even in the case of the braid groups.


The main result of the current article is the following theorem:


\begin{thm}\label{T:main}             															
There exists an algorithm which solves CDP and CSP in the braid group~$B_4$ and whose algorithmic complexity is cubic with respect to the length of the input braid words.
%
\end{thm}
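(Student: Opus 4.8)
The strategy I would pursue is to split the problem according to the Nielsen--Thurston classification of the input braids, viewing $B_4$ as the mapping class group of the $4$-punctured disc, and to treat each geometric type with a tailored procedure whose elementary operations cost at most quadratic time. As a first step, for each input word I would compute its left normal form in the classical Garside structure; this is standard, costs $O(\ell^2)$ time, and immediately yields the infimum, supremum and canonical length. From these Garside invariants, together with the action on the punctured disc, one determines in polynomial time whether the braid is \emph{periodic}, \emph{reducible}, or \emph{pseudo-Anosov}, so that the two given braids may be assumed to be of the same type before they are compared.

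The two structured types are comparatively cheap. A periodic braid is a root of the central element $\Delta^2$, hence conjugate to a power of the dual Garside element $\delta=\sigma_1\sigma_2\sigma_3$ (with $\delta^4=\Delta^2$) or of $\sigma_1\sigma_2\sigma_3\sigma_1$ (a cube root of $\Delta^2$); there are only finitely many such conjugacy classes, so recognising them and solving CSP among them is immediate. A reducible braid preserves a canonical family of disjoint essential curves; I would locate this family, cut the disc along it, and thereby reduce the conjugacy question to conjugacy problems in braid groups on fewer strands (pieces lying in $B_2$ and $B_3$, whose conjugacy problems are respectively trivial and already fast), together with a finite amount of bookkeeping recording how the curves and the tubes around them are permuted. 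Having both Garside structures available should make the reducing curves easy to detect.

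The heart of the matter, and the main obstacle, is the pseudo-Anosov case. Here I would work with a characteristic conjugacy invariant, the set of sliding circuits $SC$ (equivalently, a set of \emph{rigid} conjugates), reached by iterating the cyclic sliding operation of \cite{gebhardtgm}. The difficulty is that the general theory gives no polynomial control, either on the number of slidings needed to enter $SC$, or on the cardinality of $SC$ itself. The crucial step is therefore to prove, exploiting the small rank of $B_4$, that a pseudo-Anosov braid becomes rigid after a bounded power and that its set of sliding circuits has size bounded linearly in the canonical length and is navigable by left-weighted conjugations. I expect the proof of this bound to require playing the two Garside structures against one another---using translation length in one to constrain the combinatorics of rigidity in the other---and this is where essentially all of the work lies.

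Granting such a linear bound on the size of $SC$ and on the number of sliding steps, the complexity follows: each normal-form computation, each cyclic sliding and each convex conjugation costs $O(\ell^2)$, and only $O(\ell)$ of them are performed, for a total cubic cost; comparing the two invariant sets and extracting a conjugating element stays within the same budget. Assembling the three geometric cases then yields a single cubic algorithm solving both CDP and CSP in $B_4$.
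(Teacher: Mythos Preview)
Your overall architecture---split by Nielsen--Thurston type, handle periodic and reducible braids with existing tools, and in the pseudo-Anosov case pass to rigid conjugates and compute the set of sliding circuits---matches the paper. But the pseudo-Anosov case, which as you say is the heart of the matter, has several genuine gaps.

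First, your key claimed bound is false: you assert that the set of sliding circuits of a rigid pseudo-Anosov $4$-braid has size \emph{linear} in the canonical length. The paper proves (Theorem~\ref{cardinalbound}) only a \emph{quadratic} bound $\#SC(x)=O(\ell(x)^2)$ in the dual structure, and exhibits an explicit family $\beta_k$ for which $\#SC(\beta_k)$ grows quadratically, so the linear bound cannot hold. Your complexity accounting still lands on $O(\ell^3)$ only by accident: the paper's budget is $O(\ell)\cdot\#SC=O(\ell)\cdot O(\ell^2)$, not $O(\ell^2)\cdot O(\ell)$.

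Second, the mechanism you propose for bounding $\#SC$---``playing the two Garside structures against one another''---is not what works. The paper's proof of the quadratic bound relies essentially on the Masur--Minsky linear bound on conjugator length for pseudo-Anosov mapping classes (Theorem~\ref{masurbraids}); this deep geometric input, not an interplay of Garside structures, is what controls the diameter of $\widetilde{SCG}(x)$ in the hardest subcase.

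Third, you omit two ingredients needed to make the reduction to the rigid case effective. You need a polynomial-time procedure to \emph{find} a rigid conjugate of a suitable power (the mere existence of such a power, from \cite{bggm1}, is not enough); the paper supplies this via Proposition~\ref{bla}, again resting on Masur--Minsky. And having solved CSP for $x^s$ and $y^s$, you must return to $x$ and $y$; this requires the uniqueness of $m$th roots of pseudo-Anosov braids up to conjugacy \cite{gmroots}, which you do not invoke.
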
                                                                                                                                                                 											    %

We are not able to prove Theorem~\ref{T:main} using only the tools of Garside theory. We shall also use a geometric point of view on braids.
It is known (see e.g.~\cite{birman}) that the braid group~$B_n$ ($n\geqslant 1$) can be identified with the mapping class group of the $n$-times punctured disk~$\mathbb D_n$. In this context, braids can be classified according to their dynamical properties, in the following trichotomy (Nielsen-Thurston classification) \cite{casson,fathi}: a braid~$x$ is

%
%
%
\begin{itemize}
\item \emph{periodic}, if there exists an integer~$m$ such that $x^m\in ZB_n=\left<\Delta^2\right>$,
\item \emph{reducible}, if there exists a non-empty family~$\mathcal F$ (called the canonical reduction system) of isotopy classes of nondegenerate disjoint simple closed curves in~$\mathbb D_n$ (non-degenerate means not null-homotopic, not homotopic into a puncture and not boundary-parallel) such that
\begin{itemize}
\item the $x$-action leaves $\mathcal F$ invariant,
\item $\mathcal F$ does not intersect any other isotopy class of simple closed curve in~$\mathbb D_n$ which is invariant under some power of~$x$
\end{itemize}
\item \emph{pseudo-Anosov} (pA) otherwise.
\end{itemize}

%

We remark that the definition of ``reducible'' most frequently found in the literature also encompasses certain periodic elements. In this paper we only apply the word ``reducible'' to the braids which would usually be called ``reducible non-periodic''.

The paper~\cite{bggm1} proposes a program, based both on the Nielsen-Thurston classification and on Garside theory, for solving CDP and CSP in the braid groups \emph{in polynomial time} with respect to both the length of the input braid words and their number of strands. A first step in this program is the construction of a polynomial time algorithm for deciding the dynamical type of any given braid (Open question~1 in~\cite{bggm1}).


In \cite{calvezwiest}, the authors answered this question in the case of the group~$B_4$: they produced an algorithm of complexity~$O(\ell^2)$ to decide the Nielsen-Thurston type of any given 4-strand braid of length~$\ell$. Thus in the group~$B_4$, in order to solve CDP and CSP it is sufficient to solve these problems for pairs of elements which are known to be of the same dynamical type (as pairs of braids of different dynamical type are never conjugate).


The algorithm given in \cite{calvezwiest} also implies a solution to CDP and CSP for \emph{reducible} four-strand braids of length at most~$\ell$ in time $O(\ell^2)$. The main lemma here is that for braids with at most 3~strands and of length at most~$\ell$, the problems CDP and CSP are solvable in time $O(\ell^2)$, see \cite{calvezwiest}.


The case of \emph{periodic} braids is treated in~\cite{bggm3}, where an algorithm of complexity $O(\ell^3n^2\log n)$ for solving CDP and CSP for periodic braids with~$n$ strands of canonical length at most~$\ell$ is presented.


In order to prove Theorem~\ref{T:main}, we thus have to produce an algorithm of complexity at most~$O(\ell^3)$ capable of solving CDP and CSP for \emph{pseudo-Anosov} four-strand braids of length at most~$\ell$. Technically, our main contribution is the following result, which gives a partial affirmative answer (in the special case of pseudo-Anosov 4-strand braids) to the Open Question 2 in~\cite{bggm1}: using the vocabulary of~\cite{gebhardtgm}, if a 4-strand pseudo-Anosov braid is \emph{rigid} (meaning, roughly speaking, that the normal form is as simple as possible), then its set of sliding circuits (and also its Ultra Summit Set) is ``small'':


\begin{thm}\label{cardinalbound}
For every braid $x$ in $B_4$ which is pseudo-Anosov and rigid with respect to the dual Garside structure, the cardinality of $SC(x)$ for the dual structure is bounded above by~$O(\ell(x)^2)$.
%
\end{thm}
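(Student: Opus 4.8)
The plan is to reduce the statement to a counting problem and then to control the count using the pseudo-Anosov geometry of $\mathbb D_4$. First I would record the structure of $SC(x)$ for a rigid element, following \cite{gebhardtgm}: when $x$ is rigid, $SC(x)$ consists precisely of the rigid conjugates of $x$, all of the same canonical length $r=\ell(x)$ and the same infimum, and cyclic sliding acts on each of them by cyclically permuting the factors of the dual normal form. Thus $SC(x)$ is partitioned into orbits under cyclic sliding, each of size at most $r=\ell(x)$. Consequently it suffices to bound the \emph{number of orbits} by $O(\ell(x))$; equivalently, to bound by $O(\ell(x))$ the number of distinct cyclic rigid normal forms occurring in the conjugacy class.

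The naive bound coming from the finiteness of the dual simple elements (there are only the $14$ non-crossing partitions of four points) is exponential in $r$, so the pseudo-Anosov hypothesis must be used to collapse it. Identifying $B_4$ with the mapping class group of $\mathbb D_4$, I would attach to each rigid conjugate $y=w^{-1}xw$ its pair of invariant measured foliations $\mathcal F^{\pm}_y=w^{-1}(\mathcal F^{\pm}_x)$. Rigidity of the dual normal form should be translated into the statement that $\mathcal F^{u}_y$ is carried, in a \emph{standard} position, by a fixed train track $\theta$ on $\mathbb D_4$, and that one cyclic sliding orbit corresponds to translating the carried foliation through one period of the invariant splitting sequence of $\theta$ determined by $x$. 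Under this dictionary, two rigid conjugates lie in the same cyclic sliding orbit exactly when their unstable foliations occupy the same standard position relative to $\theta$, while distinct orbits correspond to distinct \emph{transverse} standard positions, i.e.\ to the finitely many parallel copies of $\mathcal F^{u}$ compatible with rigidity.

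The counting step is then geometric. The period of the invariant splitting sequence is comparable to the canonical length $\ell(x)$, because each simple factor of the dual normal form contributes only a bounded number of elementary splittings; this is exactly where the low complexity of the four-punctured disk is essential, and it pins the maximal orbit size at $\Theta(\ell(x))$. The number of transverse standard positions — hence the number of orbits — I would bound by $O(\ell(x))$ using the bounded transverse geometry of measured foliations on $\mathbb D_4$ together with the fact that the relevant intersection numbers grow linearly in $\ell(x)$. Combining the two steps yields
\[
|SC(x)|\ \leq\ (\text{number of orbits})\times(\text{maximal orbit size})\ =\ O(\ell(x))\times O(\ell(x))\ =\ O(\ell(x)^2).
\]

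I expect the main obstacle to be the middle step: making precise the dictionary between dual-Garside rigidity and the standard carrying position of the unstable foliation, and in particular proving that rigid conjugates in distinct cyclic sliding orbits genuinely occupy distinct transverse positions whose total number is $O(\ell(x))$, rather than the a priori exponential count. This demands a tight, four-strand-specific analysis of which simple elements can follow one another in a rigid pseudo-Anosov normal form, and of how each admissible transition displaces the carried foliation, so that both the period of the splitting sequence and the transverse width are controlled to within a constant factor of $\ell(x)$.
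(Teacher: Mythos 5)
Your reduction of the problem to (number of cyclic-sliding orbits) $\times$ (maximal orbit size) is exactly the skeleton of the paper's argument (the paper's orbits also incorporate conjugation by $\delta$, so the orbit size bound is $4\ell(x)$ rather than $\ell(x)$, but this only changes constants). The genuine gap is the middle step, which you yourself flag: nothing in the proposal actually establishes the dictionary between dual-Garside rigidity and a standard carrying position on a fixed train track, nor the claim that the number of ``transverse standard positions'' --- hence of orbits --- is $O(\ell(x))$. As written, both assertions are programmatic; no such translation between BKL normal forms and splitting sequences is available off the shelf, and without it the count of orbits remains the a priori exponential one coming from the $14$ dual simple elements. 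So the proposal identifies the right quantity to bound but does not bound it.

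For comparison, the paper obtains the $O(\ell)$ bound on the number of orbits by a case split on the weights of the simple factors (each nontrivial dual simple element of $B_4$ other than $\delta$ has weight $1$ or $2$, and the multiset of weights is an invariant of the super summit set). If the normal form mixes weights $1$ and $2$, or if all factors lie among the four ``edge'' generators, elementary normal-form computations show the quotient graph has at most $1$, respectively $6$, vertices --- in these cases the bound is actually linear and the pseudo-Anosov hypothesis is not even needed. In the remaining case every element of $SC(x)$ has a diagonal factor, whose right complement has only two strict prefixes; hence every vertex of the quotient graph is at most bivalent and the graph is a segment or a circle. Its length is then bounded by $O(\ell(x))$ via the Masur--Minsky linear conjugator bound for pseudo-Anosov mapping classes: a conjugator of length $O(\ell)$ between representatives of two orbits factors into $O(\ell)$ minimal arrows, i.e.\ a path of that length in the graph. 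So the deep pseudo-Anosov input you were reaching for via train tracks does enter, but packaged as a conjugator-length bound rather than a splitting-sequence count; and the paper's example $\beta_k$, whose quotient graph is a line with $3k+2$ vertices, shows the quadratic bound is sharp, so any correct version of your transverse-position count would likewise have to allow $\Theta(\ell)$ distinct orbits.
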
																							%

This result implies that the algorithm given in \cite{gebhardtgmalgo} for solving CDP and CSP has complexity $O(\ell^3)$ when applied to two 4-strand braids which are of length at most~$\ell$, pseudo-Anosov and rigid in the dual structure.


The rest of the proof of Theorem~\ref{T:main} thus has to consist of a reduction to the rigid case.
A key result in this direction is Theorem 3.37 in \cite{bggm1}, which states that for any braid~$x$ (with arbitrarily many strands) there exists a strictly positive integer~$m$ such that $x^m$ is conjugate to a rigid braid. Moreover, for any fixed number of strands, this integer~$m$ is bounded by a constant which does not depend on~$x$ (see Theorem~\ref{bggm1}).


The next step in the reduction to the rigid case is provided by the result from~\cite{gmroots} that for any $m\in\mathbb N$, any pseudo-Anosov braid has \emph{at most one} $m$th root. Therefore, for any two pseudo-Anosov braids $x,y\in B_n$, for any positive integer~$m$, and for any braid $z\in B_n$, the relations $x=y^z$ and $x^m=(y^m)^z$ are equivalent. Thus the only remaining problem is the following: for any pair $x,y$ of pseudo-Anosov braids we have to produce (in polynomial time) a suitable power~$s$ and rigid conjugates $\bar x$ of~$x^s$ and~$\bar y$ of~$y^s$ (together with conjugating elements).


The existence of a polynomial time algorithm which, for any given pseudo-Anosov braid, constructs a rigid conjugate (if one exists), together with a conjugating element, is proved in~\cite{calvez} (see Proposition~\ref{bla} below). This algorithm is based on the linear conjugator bound for pseudo-Anosov elements in mapping class groups due to Masur and Minsky \cite{masur}. We deduce the following algorithm:


\begin{thm}\label{algosecond}
There is an algorithm with the following propoerties:
\begin{itemize}
\item as input, it takes two pseudo-Anosov braids $x,y\in B_n$ of canonical length at most~$\ell$,
\item as output, it yields an integer~$s$ and $n$-strand braids $\bar x$, $\bar y$, $z_1$ and $z_2$ such that $\bar x$ and $\bar y$ are rigid and satisfy $\bar x=(x^s)^{z_1}$ and $\bar y=(y^s)^{z_2}$,
\item for any fixed~$n$, the complexity is $O(\ell^2)$.
\end{itemize}																

\end{thm}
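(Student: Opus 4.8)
The plan is to combine the uniform power bound of Theorem~\ref{bggm1} with the rigidification procedure of Proposition~\ref{bla}, the only extra ingredient being the choice of a single exponent that rigidifies both input braids at once. By Theorem~\ref{bggm1}, for the fixed number of strands~$n$ there is a constant $M=M(n)$, independent of the input, such that for every braid $w\in B_n$ some power $w^{m}$ with $1\leqslant m\leqslant M$ is conjugate to a rigid braid. I would set
\[
s=\mathrm{lcm}(1,2,\ldots,M),
\]
which is again a constant depending only on~$n$. The point of this choice is that $s$ is a multiple of every integer in $\{1,\ldots,M\}$, hence in particular of the (a priori unknown) exponents that rigidify $x$ and~$y$ separately.

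First I would record two elementary facts. Since the Nielsen--Thurston type is preserved under taking powers, $x^{s}$ and $y^{s}$ are again pseudo-Anosov. And since a positive power of a rigid braid of positive canonical length is again rigid (a standard property of rigid braids, see~\cite{bggm1,gebhardtgm}), the relation ``$w^{m}$ is conjugate to a rigid braid'' is inherited by $w^{km}$ for every $k\geqslant 1$: if $w^{m}=R^{u}$ with $R$ rigid, then $w^{km}=(R^{k})^{u}$ and $R^{k}$ is rigid. Applying this to $w=x$ and to $w=y$ with the exponents $m\leqslant M$ furnished by Theorem~\ref{bggm1}, and using $m\mid s$, we conclude that \emph{both} $x^{s}$ and $y^{s}$ are conjugate to rigid braids. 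In particular, the algorithm of Proposition~\ref{bla}, which produces a rigid conjugate \emph{if one exists}, is guaranteed to succeed on each of them.

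The algorithm itself is then immediate: compute the left normal forms of $x^{s}$ and $y^{s}$, apply the algorithm of Proposition~\ref{bla} to each, and return the resulting rigid conjugates $\bar x,\bar y$ together with the conjugating elements $z_{1},z_{2}$ and the exponent~$s$, so that $\bar x=(x^{s})^{z_1}$ and $\bar y=(y^{s})^{z_2}$. For the complexity, note that $s$ is a constant, so $x^{s}$ and $y^{s}$ have canonical length $O(\ell)$; their normal forms are obtained from $s$ concatenated copies of the normal forms of $x$ and~$y$ by left-weighting, which for fixed~$n$ costs $O(\ell^{2})$. According to Proposition~\ref{bla}, rigidifying a pseudo-Anosov braid of canonical length $O(\ell)$ also costs $O(\ell^{2})$ for fixed~$n$. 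Hence the total cost is $O(\ell^{2})$.

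The only genuinely delicate point is precisely the one addressed by the choice $s=\mathrm{lcm}(1,\ldots,M)$: Theorem~\ref{bggm1} provides, for each braid separately, \emph{some} power at most~$M$ that is conjugate to a rigid braid, but these powers may differ for $x$ and~$y$, and the algorithm cannot afford to determine them explicitly. Forcing a single exponent~$s$ that is simultaneously divisible by all candidate powers, combined with the stability of rigidity (up to conjugacy) under taking positive powers, is what reconciles the two braids while keeping $s$ bounded by a constant depending only on~$n$.
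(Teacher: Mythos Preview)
Your proof is correct and uses the same ingredients as the paper (Theorem~\ref{bggm1}, Proposition~\ref{bla}, and the stability of rigidity under positive powers), but with a cleaner choice of exponent. The paper actually \emph{searches} for the exponents: for each $i=1,\ldots,\beta(n)-1$ it iterates cyclic sliding on $x^i$ (and likewise on $y^i$) in parallel until some $x^{i_x}$ and $y^{i_y}$ become rigid, records the conjugators $z_x,z_y$, and then sets $s=\mathrm{lcm}(i_x,i_y)$; the output braids are the powers $\bar x=\tilde x^{\,s/i_x}$ and $\bar y=\tilde y^{\,s/i_y}$ of the rigid braids already found. Your trick of taking $s=\mathrm{lcm}(1,\ldots,M)$ up front avoids this parallel search entirely, at the price of a possibly much larger constant exponent---harmless since $n$ is fixed and the complexity claim is only in~$\ell$. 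The paper's route, by contrast, yields a smaller~$s$ (at most $\beta(n)^2$ rather than roughly $e^{M}$), which is irrelevant asymptotically but pleasant in practice.

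One small omission: Proposition~\ref{bla} as stated applies to a braid already lying in its own Super Summit Set, so you should insert the step of first sliding $x^s$ and $y^s$ into their SSS via Theorem~\ref{bklbound}; this also costs $O(\ell^2)$ and is exactly what the paper does when it cites both Theorem~\ref{bklbound} and Proposition~\ref{bla} to bound the number of sliding iterations.
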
																							%

We can now describe the algorithm promised by Theorem~\ref{T:main}:


{\bf{ALGORITHM:}} \\
INPUT: $x$ and $y$ two elements of the four-strand braid group.\\
OUTPUT: whether or not~$x$ and~$y$ are conjugate, and if they are, an element~$z\in B_4$ so that $x=y^z$.
\begin{itemize}
\item[(1)] Determine the dynamical types of $x$ and $y$, using~\cite{calvezwiest}. If they are not the same, answer ``$x$ and~$y$ are not conjugate'' and STOP.
\item[(2)] If $x$ and $y$ are periodic use \cite{bggm3} and STOP.
\item[(3)] If $x$ and $y$ are reducible, use \cite{calvezwiest} and STOP.
\item[(4)] If $x$ and $y$ are pseudo-Anosov, use the algorithm of Theorem~\ref{algosecond} in order to produce $s,\bar x,\bar y,z_1,z_2$ with the required properties.
\item[(5)] Apply Algorithm 3 of \cite{gebhardtgmalgo} to $\bar x$ and $\bar y$.  If~$\bar x$ and~$\bar y$ are conjugate, then this algorithm produces $c\in B_4$ such that $\bar x={\bar y}^c$. In this case answer ``$x$ is conjugate to~$y$ by $z_2cz_1^{-1}$." and STOP.
\item[(6)]  Answer ``$x$ and~$y$ are not conjugate".
\end{itemize}


This paper is organized as follows. In Section~\ref{S:Prolegomenes} we recall some prerequisites from Garside theory. In Section~\ref{ProofTheorem2} we prove Theorem~\ref{cardinalbound} bounding the size of the sets of sliding circuits. In Section~\ref{ProofTheorems13} we prove Theorem~\ref{algosecond} and finally Theorem~\ref{T:main}.

{\bf{Acknowledgements.}} The authors are grateful to Juan Gonz\'{a}lez-Meneses for suggesting a simplification in the proof of Theorem \ref{cardinalbound}. 
The first-named author was partially supported by a grant from R\'{e}gion Bretagne, by MTM2010-19355 and FEDER.


\section{Prerequisites from Garside theory}\label{S:Prolegomenes}

In this section we first recall some general facts which apply to all Garside groups. Then we turn our attention to rigid elements and describe in detail the structure of the sets of sliding circuits in this particular case. Finally, we recall briefly the dual Garside structure on the 4-strand braid group. Note that none of the results in this section are new; however, we shall introduce in the second and third part some non-standard notation which will be useful for the rest of the paper.



\subsection{Reminders on Garside theory}


Throughout this subsection, $G$ denotes a Garside group. Its partial order relation is denoted~$\preccurlyeq$, and the associated positive cone is~$P$. The order~$\preccurlyeq$ is a lattice: the greatest common divisor of two elements~$x,y$ of~$G$ is denoted $x\wedge y$. The set of divisors of the Garside element~$\Delta$ which lie in~$P$ is finite and generates~$G$; its elements are called the \emph{simple} elements. We denote~$\tau$ the interior automorphism associated to~$\Delta$; it preserves~$P$ and the relation~$\preccurlyeq$. In particular, the automorphism~$\tau$ induces a permutation of the (finite) set of simple elements, and since these elements generate~$G$, the interior automorphism~$\tau$ is of finite order.


We also recall that every simple element~$s$ possesses a \emph{right complement} $\partial(s)$ defined by the formula $\partial(s)=s^{-1}\Delta$. This notion allows us to define:


\begin{defi}
Let $s_1$ and $s_2$ be two simple elements of~$G$. We say the pair $s_1 s_2$ is \emph{left-weighted} if $\partial(s_1)\wedge s_2=1$, in other words if $s_1$ is the greatest simple divisor of $s_1 s_2$.
%
\end{defi}

\begin{prop} \cite{dehornoy,em}
Let $x\in G$. There exists a unique decomposition $x=\Delta^px_1\ldots x_r$, where~$r$ is a non-negative integer, $p$ is the greatest integer satisfying $\Delta^p\preccurlyeq x$, the $x_i$ are simple elements with $x_r\neq 1$, and (if $r\geqslant 2$) the pair $x_i x_{i+1}$ is left-weighted for $i=1,\ldots,r-1$.
%
\end{prop}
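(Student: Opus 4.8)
The plan is to prove existence and uniqueness separately, the whole argument resting on a single computation: that the pair $s_1 s_2$ is \emph{left-weighted} if and only if $s_1$ equals the greatest simple left-divisor of $s_1s_2$, i.e.\ $s_1=(s_1s_2)\wedge\Delta$. I would establish this first. Writing $d=(s_1s_2)\wedge\Delta$, and noting that $s_1$ is simple with $s_1\preccurlyeq s_1s_2$, one has $s_1\preccurlyeq d$, so $s_1^{-1}d$ is a positive element. It satisfies both $s_1^{-1}d\preccurlyeq s_1^{-1}(s_1s_2)=s_2$ and $s_1^{-1}d\preccurlyeq s_1^{-1}\Delta=\partial(s_1)$, hence $s_1^{-1}d\preccurlyeq \partial(s_1)\wedge s_2$. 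Thus the left-weighted condition $\partial(s_1)\wedge s_2=1$ forces $d=s_1$, and the converse is immediate. This local criterion is the engine of both halves of the proof.

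For existence I would first reduce to the positive case. Using $s^{-1}=\partial(s)\Delta^{-1}$ together with the fact that $\tau$ permutes the simple elements (so $\Delta^{-1}t=\tau^{-1}(t)\Delta^{-1}$ with $\tau^{-1}(t)$ simple), any product of simples and their inverses can be rewritten as $\Delta^q u$ with $u\in P$ and $q\in\mathbb Z$; since the simples generate $G$, every $x\in G$ admits such an expression. It then suffices to put a positive element $u$ in greedy form. I extract factors recursively: set $x_1=u\wedge\Delta$ (the greatest simple left-divisor of $u$), write $u=x_1u_1$ with $u_1\in P$, and repeat on $u_1$, stopping when the remainder is trivial. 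Left-weightedness of each consecutive pair follows from the criterion above: since $x_{i+1}\preccurlyeq u_i$ we get $x_ix_{i+1}\preccurlyeq x_iu_i$, whence $(x_ix_{i+1})\wedge\Delta\preccurlyeq (x_iu_i)\wedge\Delta=x_i$, while $x_i\preccurlyeq(x_ix_{i+1})\wedge\Delta$ holds trivially, giving $x_i=(x_ix_{i+1})\wedge\Delta$. Finally I absorb any leading factor equal to $\Delta$ into the power, increasing $q$; after this the remaining first factor differs from $\Delta$, and $x_r\neq1$ by construction.

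For uniqueness I would argue that each ingredient is intrinsically determined by $x$. The power $p$ is, by definition, the largest integer with $\Delta^p\preccurlyeq x$; the greedy criterion shows this coincides with the exponent produced above, because the relabeled first factor satisfies $x_1=(x_1\cdots x_r)\wedge\Delta\neq\Delta$, which is exactly the obstruction to factoring out one further $\Delta$. Given $p=p'$, the positive parts agree, and applying the criterion once more identifies $x_1=\Delta\wedge(x_1\cdots x_r)=x_1'$; left cancellation then reduces the problem to a shorter identity, and induction yields $r=r'$ and $x_i=x_i'$ for all $i$, the condition $x_r,x_{r'}'\neq1$ pinning down the length.

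The main obstacle is not the inductive bookkeeping but the two infrastructure facts supplied by the Garside axioms: that every group element has a fractional expression $\Delta^q u$ with $u$ positive, and that the greedy extraction terminates. Both rest on the finite-type structure --- the existence of an additive length on $P$, so that the number of atoms in the positive remainder strictly decreases at each step and $P$ is Noetherian, together with the stability of $P$ and of $\preccurlyeq$ under $\tau$. Once these are in hand, every remaining step is a short lattice manipulation built from the single criterion proved at the outset.
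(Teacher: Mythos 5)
The paper does not prove this proposition at all: it is quoted as a known result with a citation to Dehornoy and to ElRifai--Morton, so there is no in-paper argument to compare against. Your proof is the standard one found in those references, and it is essentially correct: the equivalence ``$s_1s_2$ left-weighted $\iff s_1=(s_1s_2)\wedge\Delta$'' is proved cleanly (and, usefully, your argument never uses that $s_2$ is simple, only that it is positive), the reduction to a fractional form $\Delta^q u$ with $u\in P$ is right, and the greedy extraction plus Noetherianity of $P$ gives existence.

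One step is compressed enough to deserve a warning. In the uniqueness part you write that ``applying the criterion once more identifies $x_1=\Delta\wedge(x_1\cdots x_r)$.'' The hypothesis you actually have is only the \emph{pairwise} condition $\partial(x_i)\wedge x_{i+1}=1$, whereas invoking your criterion with $s_2=x_2\cdots x_r$ requires the stronger statement $\partial(x_1)\wedge(x_2\cdots x_r)=1$. This is true, but it needs its own (downward) induction: assuming $x_2=\Delta\wedge(x_2\cdots x_r)$ by the inductive hypothesis, one gets $\partial(x_1)\wedge(x_2\cdots x_r)\preccurlyeq\Delta\wedge(x_2\cdots x_r)=x_2$, hence $\partial(x_1)\wedge(x_2\cdots x_r)\preccurlyeq\partial(x_1)\wedge x_2=1$. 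Folding this into your induction on $r$ closes the gap; without it, the identification of the first factors in two putative normal forms is not yet justified. A second, smaller omission: you should say explicitly why $u\wedge\Delta\neq1$ for a nontrivial positive $u$ (every atom divides $\Delta$ in a Garside structure), since this is what guarantees the greedy extraction makes progress.
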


In the previous proposition, the decomposition $x=\Delta^px_1\ldots x_r$ is called the (left) normal form of~$x$; the integers~$p$ and~$r$ are called the \emph{infimum} and the \emph{canonical length} of~$x$, and they are denoted $\inf(x)$ and $\ell(x)$. The \emph{supremum} $\sup(x)$ is the quantity $p+r$. For every element~$x$ of $G$, $\sup(x)=\min\{k\in \mathbb Z, x\preccurlyeq \Delta^k\}$. We observe that the elements of canonical length zero are precisely the powers of~$\Delta$; these elements are as simple as possible within their conjugacy class.


\begin{remark}\label{remarklength}\rm
The set of simple elements, taken as a generating set of~$G$, induces a length function on~$G$: the length~$|x|$ of an element~$x$ of~$G$ is by definition the smallest possible length of a word representing~$x$ whose letters are simple elements or their inverses. We note that always $\ell(x)\leqslant |x|$. We also have the following relations, for any~$x$ satisfying~$\inf(x)=p$ and $\ell(x)=r$ \cite{dehornoy,charneymeier}:
$$|x|=\begin{cases}
p+r & \text{if $p\geqslant 0$},\\
r     & \text{if $p<0$ and $|p|\leqslant r$,}\\
|p|  & \text{if $p<0$ and $|p|>r$.}
\end{cases}$$
%
\end{remark}

\begin{defi}
Let $x\in G$, and let $x=\Delta^p x_1\ldots x_r$ be the normal form of~$x$. Suppose~$r\geqslant 1$. We call
\begin{itemize}
\item \emph{initial factor} of $x$ the simple element $\iota(x)=\tau^{-p}(x_1)$,
\item \emph{final factor} of $x$ the simple element $\varphi(x)=x_r$.
\end{itemize}
\end{defi}

We recall that~$G$ is equipped with different operations which are defined in terms of normal forms, each corresponding to a particular conjugation.

\begin{defi}\cite{em}
Let $x\in G$ with normal form $x=\Delta^px_1\ldots x_r$. Suppose $r\geqslant 1$. We define:
\begin{itemize}
\item the \emph{cycling} $\mathbf c(x)=x^{\iota(x)}=\Delta^p x_2\ldots x_r \tau^{-p}(x_1)$,
\item the \emph{decycling} $\mathbf d(x)=x^{\varphi(x)^{-1}}=\Delta^p \tau^p(x_r)x_1\ldots x_{r-1}$.
\end{itemize}
If $\ell(x)=0$, we also define $\mathbf c(x)=\mathbf d(x)=x$.
\end{defi}

Note that cycling and decycling commute with the automorphism~$\tau$. These two operations make it possible to ``simplify'' elements of~$G$ within their conjugacy class:


\begin{prop}\cite{em}\label{cyclagedecyclage}
Let~$x\in G$.
\begin{itemize}
\item[(i)] The subset of the conjugacy class of~$x$ consisiting of all elements with minimal canonical length is finite and non-empty. Its elements have \emph{simultaneously} maximal infimum and minimal supremum. This subset is called the \emph{Super Summit Set} of $x$, and denoted $SSS(x)$.
\item[(ii)]  There exist $k_0,l_0\in \mathbb N$ such that for every $k\geqslant k_0$ and $l\geqslant l_0$, $\mathbf c^k(\mathbf d^l(x))\in SSS(x)$.
\end{itemize}
\end{prop}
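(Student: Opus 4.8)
The plan is to reduce both assertions to one easy \emph{monotonicity} property and one harder \emph{progress} property of $\mathbf c$ and $\mathbf d$. For monotonicity, write the normal form $x=\Delta^p x_1\ldots x_r$ with $r\geqslant 1$. The identity $\mathbf c(x)=\Delta^p x_2\ldots x_r\,\tau^{-p}(x_1)$ exhibits $\mathbf c(x)$ as $\Delta^p$ times a product of $r$ simple factors (here $\tau^{-p}(x_1)$ is simple, as $\tau$ permutes the simple elements), so $\Delta^p\preccurlyeq\mathbf c(x)\preccurlyeq\Delta^{p+r}$ and therefore $\inf(\mathbf c(x))\geqslant\inf(x)$ and $\sup(\mathbf c(x))\leqslant\sup(x)$; the analogous computation for $\mathbf d$ gives the same inequalities. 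In particular $\ell$ never increases under either operation. Iterating $\mathbf c$, the canonical length is thus non-increasing and hence eventually constant; but when $\ell$ is constant a strict increase of $\inf$ would force, via $\sup=\inf+\ell$, a strict increase of $\sup$, contradicting monotonicity. Hence along the cycling orbit $\inf$, $\sup$ and $\ell$ all stabilize, and as there are only finitely many conjugates with a prescribed infimum and canonical length, the orbit is eventually periodic; the same holds for $\mathbf d$.

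The crux is the \emph{progress} property, which I expect to be the main obstacle: writing $\inf_s(x)=\max\{\inf(y):y\sim x\}$ and $\sup_s(x)=\min\{\sup(y):y\sim x\}$, I must show that if $\inf(x)<\inf_s(x)$ then some iterate $\mathbf c^N(x)$ has strictly larger infimum, and dually that $\mathbf d$ strictly lowers the supremum when $\sup(x)>\sup_s(x)$. The starting observation is exact: by the formula above, one cycling raises the infimum iff $\Delta\preccurlyeq x_2\ldots x_r\,\tau^{-p}(x_1)$. To force this to occur, I would invoke the existence of a $\preccurlyeq$-minimal positive element $\rho$ with $\inf(x^\rho)>\inf(x)$, and track how it interacts with the initial factors: conjugation by $\iota(x)$ removes a nontrivial left divisor of $\rho$, so the minimal such conjugator for $\mathbf c(x)$ is strictly smaller than $\rho$. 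Since $\rho$ cannot shrink indefinitely, after boundedly many cyclings the minimal conjugator is absorbed into a single step and the infimum jumps. Combined with the stabilization of the previous paragraph, this identifies the stable infimum along the cycling orbit with $\inf_s(x)$, so in particular $\inf_s(x)$ is finite and attained; dually $\sup_s(x)$ is finite and attained, reached by iterated decycling.

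Granting these, (i) and (ii) follow. Starting from $x$, iterate decycling until the supremum reaches $\sup_s(x)$ and then stays there, giving $l_0$ with $\sup(\mathbf d^l(x))=\sup_s(x)$ for all $l\geqslant l_0$; now iterate cycling on $\mathbf d^l(x)$: monotonicity pins the supremum at $\sup_s(x)$ (it can neither rise nor fall below the minimum) while progress raises the infimum to $\inf_s(x)$, and because only finitely many conjugates are relevant a single $k_0$ works for all such $l$. Thus $\mathbf c^k(\mathbf d^l(x))$ has $\inf=\inf_s(x)$ and $\sup=\sup_s(x)$ for all $k\geqslant k_0$, $l\geqslant l_0$, which is (ii) and also gives the non-emptiness in (i). Finally, any conjugate $y$ satisfies $\inf(y)\leqslant\inf_s(x)$ and $\sup(y)\geqslant\sup_s(x)$, so $\ell(y)\geqslant\sup_s(x)-\inf_s(x)$ with equality exactly when both extrema are attained at once; hence the elements of minimal canonical length are precisely those with simultaneously maximal infimum and minimal supremum. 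Every such element has the form $\Delta^{\inf_s(x)}s_1\ldots s_m$ with $m=\sup_s(x)-\inf_s(x)$ and each $s_i$ simple, and since there are finitely many simple elements the set is finite, completing (i).
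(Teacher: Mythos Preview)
The paper does not give its own proof of this proposition; it is quoted from ElRifai--Morton \cite{em} (with the quantitative refinement being \cite{bklsssbound}), so there is no in-paper argument to compare against. Your outline is the standard ElRifai--Morton route and is correctly structured: monotonicity of $\inf$, $\sup$, $\ell$ under $\mathbf c$ and $\mathbf d$, eventual periodicity from finiteness of conjugates with fixed $\inf$ and $\ell$, then the ``progress'' lemma to pin the stable values at $\inf_s(x)$ and $\sup_s(x)$, and finally the counting argument for finiteness of $SSS(x)$.

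The one place where your sketch is thin is exactly where the real content of \cite{em,bklsssbound} sits. You assert that ``conjugation by $\iota(x)$ removes a nontrivial left divisor of $\rho$,'' i.e.\ that the $\preccurlyeq$-minimal positive $\rho$ with $\inf(x^\rho)>\inf(x)$ satisfies $\iota(x)\wedge\rho\neq 1$. This is true, but it is precisely the lemma that needs proving: one shows that such a minimal $\rho$ is simple and in fact $\rho\preccurlyeq\iota(x)$ (using that $\inf(x^\rho)>\inf(x)$ forces $\tau^{p}(\rho)\preccurlyeq x_1\cdots x_r\rho$ and then minimality plus left-weightedness of the normal form). Once that is established, your ``$\rho$ cannot shrink indefinitely'' argument goes through with the additive length $\lambda$ (or atom-length) as the well-founded measure, which is how \cite{bklsssbound} gets the explicit bound quoted later as Theorem~\ref{bklbound}. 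Your uniformity claim for $k_0$ (``only finitely many conjugates are relevant'') is also correct: for $l\geqslant l_0$ the elements $\mathbf d^l(x)$ lie in a finite $\mathbf d$-periodic set, so a single $k_0$ suffices.
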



More recently, Gebhardt and Gonz\'{a}lez-Meneses introduced a new type of conjugation which combines cycling and decycling into a single, conceptually simpler, operation:


\begin{defi}\label{cyclicsliding}\cite{gebhardtgm}
Let $x\in G$ with normal form $x=\Delta^px_1\ldots x_r$. Suppose $r\geqslant 1$. We define the \emph{preferred prefix} of~$x$ by the formula $\mathfrak p(x)=\iota(x)\wedge \partial(\varphi(x))$. \emph{Cyclic sliding} is the operation~$\mathfrak s$ defined by
$$\mathfrak s(x)=x^{\mathfrak p(x)}.$$
If $\ell(x)=0$ then we also define $\mathfrak s(x)=x$.
\end{defi}


Now the analogue of Proposition~\ref{cyclagedecyclage}~(ii), also proved in \cite{gebhardtgm}, states that for every $x\in G$, there exists an integer~$k_0$ such that $\mathfrak s^k(x)\in SSS(x)$ whenever $k\geqslant k_0$. Moreover, the observation that $\mathfrak s$ preserves $SSS(x)$ implies that the set of periodic points of~$\mathfrak s$ in the conjugacy class of~$x$ is a (finite) nonempty subset of~$SSS(x)$; this is another conjugacy invariant:


\begin{defi}
Let $x\in G$. The \emph{set of sliding circuits} of~$x$ is the set of all conjugates of~$x$ which are periodic points of the cyclic sliding operation. That is,
$SC(x)=\{y\in x^G\ |\  \exists k\in \mathbb N, \ \mathfrak s^k(y)=y\}$.
%
\end{defi}

An important example of fixed points of~$\mathfrak s$ are the so-called rigid elements:

\begin{defi}
Let $x\in G$ with normal form $x=\Delta^px_1\ldots x_r$. Suppose $r\geqslant 1$. We say~$x$ is  \emph{rigid} if the pair $\varphi(x)\iota(x)$ is left-weighted.
\end{defi}

In particular, an element of canonical length 0 is not rigid.

One very useful quality of the Super Summit Set is that it can quickly be reached by iterated cyclic sliding:

\begin{thm}\cite{bklsssbound,gebhardtgm}\label{bklbound}
Let~$G$ be a Garside group. Then there exists a constant~$\alpha$, which depends only on the group ~$G$ and its Garside structure, such that for every ${x\in G}$, $\mathfrak s^{\ell(x)\cdot \alpha}(x)\in SSS(x)$.
%
\end{thm}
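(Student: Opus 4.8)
The plan is to reduce the statement to a controlled decrease of the canonical length. By Proposition~\ref{cyclagedecyclage}(i), an element $y$ of the conjugacy class $x^G$ lies in $SSS(x)$ if and only if its canonical length equals the minimal value $\ell_{\min}$ attained on $x^G$; hence it suffices to show that $\ell\bigl(\mathfrak s^{\ell(x)\cdot\alpha}(x)\bigr)=\ell_{\min}$ for a suitable group-dependent constant $\alpha$. I would organise the whole argument around the integer-valued, bounded-below sequence $k\mapsto \ell(\mathfrak s^k(x))$ and show that it descends to $\ell_{\min}$ at a uniform rate.

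First I would record the monotonicity of cyclic sliding: for every $y\in G$ one has $\inf(\mathfrak s(y))\geqslant \inf(y)$ and $\sup(\mathfrak s(y))\leqslant \sup(y)$, so that $\ell(\mathfrak s(y))\leqslant \ell(y)$ (this is part of the basic theory of $\mathfrak s$ developed in \cite{gebhardtgm}). In particular the sequence $\ell(\mathfrak s^k(x))$ is non-increasing, and any ``plateau'' on which it stays constant is automatically a stretch on which both $\inf$ and $\sup$ are frozen. Since $\mathfrak s$ preserves $SSS$, once the sequence reaches $\ell_{\min}$ it remains there; thus the entire problem is to bound the length of the transient part, i.e. the number of slidings before $\ell$ first equals $\ell_{\min}$.

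The heart of the proof is the following uniform plateau bound, which I regard as the main obstacle: there is a constant $\alpha$, depending only on $(G,\preccurlyeq,\Delta)$, such that whenever $y\notin SSS(y)$ one has $\ell(\mathfrak s^{\alpha}(y))<\ell(y)$. Granting this, the theorem follows by a short induction. Setting $y_0=x$ and $y_{j+1}=\mathfrak s^{\alpha}(y_j)$, the lemma forces $\ell(y_{j+1})<\ell(y_j)$ as long as $y_j\notin SSS(y_j)$; since $\ell$ is integer valued and drops by at least one at each such step, after at most $\ell(x)-\ell_{\min}\leqslant \ell(x)$ steps we reach some $y_J=\mathfrak s^{\alpha J}(x)\in SSS(x)$ with $J\leqslant \ell(x)$. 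As $\mathfrak s$ preserves $SSS$, applying further slidings keeps us there, and we conclude $\mathfrak s^{\ell(x)\cdot\alpha}(x)\in SSS(x)$, which is exactly the desired bound.

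It remains to establish the plateau bound, and this is where all the real difficulty lies. On a plateau, $\inf$ and $\sup$ are constant, so $\mathfrak s$ permutes a fixed ``slice'' of the conjugacy class; the fact (recalled just before the definition of $SC$) that the periodic points of $\mathfrak s$ in $x^G$ all belong to $SSS$ shows that, while $y\notin SSS$, the trajectory $y,\mathfrak s(y),\mathfrak s^2(y),\dots$ cannot repeat, and hence cannot idle on the plateau forever before $\ell$ drops. The delicate point, however, is to make this bound \emph{uniform}: a naive pigeonhole argument only bounds the plateau length by the number of class elements with the given infimum and supremum, a quantity that depends on $x$. Obtaining a constant $\alpha$ independent of $x$ requires the finer structural analysis of cyclic sliding from \cite{bklsssbound}---in particular the behaviour of the transport map and of the preferred prefixes $\mathfrak p(\mathfrak s^i(y))$ along the trajectory---which forces the length to strictly decrease after a number of slidings bounded solely in terms of the Garside structure. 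I expect this uniform plateau estimate, rather than the bookkeeping in the preceding paragraph, to be the genuinely hard ingredient.
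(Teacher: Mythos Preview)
The paper does not give its own proof of this theorem: it is stated as a known result, cited from \cite{bklsssbound,gebhardtgm}, and then immediately used. So there is no ``paper's proof'' to compare against.

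Your proposal correctly identifies the architecture such a proof would have---monotonicity of $\ell$ under $\mathfrak s$, a uniform plateau bound (``if $y\notin SSS(y)$ then $\ell(\mathfrak s^{\alpha}(y))<\ell(y)$''), and then a trivial induction on $\ell(x)$---and you are right that the plateau bound is the entire content. However, you do not prove that bound either: you describe why a naive pigeonhole fails and then defer to the literature for the ``finer structural analysis''. So your write-up, like the paper's, ultimately outsources the theorem to the cited references. One minor correction on the citations: \cite{bklsssbound} establishes the bound for iterated \emph{cycling} and \emph{decycling} (whence the explicit constants $\alpha=\frac{n(n-1)}{2}$ and $\alpha=n-1$ the paper quotes); the transfer to the cyclic sliding operation $\mathfrak s$ is carried out in \cite{gebhardtgm}. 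The transport/preferred-prefix machinery you allude to lives in the latter, not the former.
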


For instance in the case of the classical Garside structure on~$B_n$ we have $\alpha=\frac{n(n-1)}{2}$, and $\alpha=n-1$ for the dual structure. This result has a very important algorithmic application: it yields an algorithm of complexity $O(\ell^2)$ for calculating an element $y\in SSS(x)$ for any given $x\in B_n$ in normal form of canonical length $\ell(x)=\ell$. The algorithm can even output an explicit conjugating element between~$x$ and~$y$.

For the rest of the paper, we shall mostly be dealing with braids which lie in their own Super Summit Set (because pushing braids into their own SSS only costs $O(\ell^2)$, as we have just seen).


In order to prove Theorem~\ref{cardinalbound}, we need to study the structure of the set of rigid conjugates of any element~$x\in G$. This is the object of the next subsection.



\subsection{Ultra Summit Sets and rigid elements}\label{elementsrigides}

Our aim in this subsection is to describe the structure of the set of all rigid conjugates of an element of a Garside group. We use the same notation as in the previous subsection. Our study is based mainly on the following proposition from~\cite{gebhardtgm}:


\begin{prop}\cite{gebhardtgm}
Let $x\in G$. Suppose that~$x$ has a rigid conjugate. Then $SC(x)$ is precisely the set of all rigid conjugates of~$x$.
\end{prop}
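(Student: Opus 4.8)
The plan is to prove the two inclusions of the claimed equality separately. Write $R$ for the set of all rigid conjugates of~$x$, and assume $R\neq\varnothing$, so that what must be shown is $SC(x)=R$.

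The inclusion $R\subseteq SC(x)$ is a direct computation with preferred prefixes. Let $y\in R$; then $\ell(y)\geqslant 1$ and, by definition of rigidity, the pair $\varphi(y)\iota(y)$ is left-weighted, i.e.\ $\partial(\varphi(y))\wedge\iota(y)=1$. Since $\wedge$ is commutative, the preferred prefix of Definition~\ref{cyclicsliding} is $\mathfrak p(y)=\iota(y)\wedge\partial(\varphi(y))=1$, whence $\mathfrak s(y)=y^{\mathfrak p(y)}=y$. Thus every rigid element is a fixed point of cyclic sliding; as $y$ is moreover a conjugate of~$x$, this yields $y\in SC(x)$. The same computation records the elementary fact I will use below: a rigid element forms a sliding circuit of length one.

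The inclusion $SC(x)\subseteq R$ is the heart of the matter; since every element of $SC(x)$ is by definition a conjugate of~$x$, it suffices to prove that every $y\in SC(x)$ is rigid. I would deduce this from the dichotomy that \emph{rigidity is constant on $SC(x)$}: either all elements of $SC(x)$ are rigid, or none is. Granting the dichotomy, the hypothesis $R\neq\varnothing$ together with the first inclusion furnishes a rigid element inside $SC(x)$, so the ``all'' alternative holds and we are done. To prove the dichotomy I would use the structure of $SC(x)$ established in~\cite{gebhardtgm}: just as the Super Summit Set~\cite{em} and the Ultra Summit Set~\cite{gebhardt} are connected under conjugation by suitable minimal simple elements, the set of sliding circuits is connected in the same sense, so that any two of its elements are joined by a finite chain $y=u_0,u_1,\ldots,u_m=y'$ in which each $u_{i+1}$ is obtained from $u_i$ by conjugation by a minimal simple element and each $u_i$ lies in $SC(x)$. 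The dichotomy then follows once one shows that rigidity is preserved along every edge: if $u,v\in SC(x)$ and $v=u^{s}$ with $s$ a minimal connecting element, then $u$ is rigid if and only if $v$ is. Propagating along the chain makes rigidity constant on the connected set $SC(x)$.

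The main obstacle is precisely this edge-invariance of rigidity. Establishing it requires controlling how the initial and final factors $\iota$ and $\varphi$ transform under conjugation by the minimal elements that connect points of $SC(x)$, and then verifying that left-weightedness of $\varphi(u)\iota(u)$ is inherited by $\varphi(v)\iota(v)$. The natural tool is the transport map of~\cite{gebhardtgm}, which conjugates the cyclic-sliding image of $u$ to that of $v$ and is compatible with the cyclic normal form; the delicate point is that a single connecting element need not be a convenient prefix of $u$, so its effect on the weighting at the seam $\varphi\,\iota$ must be traced through its interaction with the preferred prefix. An equivalent route avoids connectivity and instead proves directly that iterated cyclic sliding of any conjugate of a rigid element eventually becomes rigid; since every $y\in SC(x)$ is $\mathfrak s$-periodic and a rigid element is a fixed point, a circuit meeting a rigid element has length one, again forcing each $y\in SC(x)$ to be rigid. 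Either way, the substantive input is the compatibility of cyclic sliding with the rigid normal form, which is the technical contribution of~\cite{gebhardtgm}.
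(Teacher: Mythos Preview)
The paper does not supply its own proof of this proposition: it is quoted verbatim from~\cite{gebhardtgm} and simply attributed there, so there is nothing in the present paper to compare your argument against line by line.

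That said, your treatment is sound as far as it goes. The inclusion $R\subseteq SC(x)$ is complete and correct: for rigid~$y$ one has $\mathfrak p(y)=1$, hence $\mathfrak s(y)=y$, so $y$ is a fixed point of cyclic sliding and belongs to $SC(x)$. For the reverse inclusion you correctly isolate the real content --- either edge-invariance of rigidity along the minimal-arrow graph on $SC(x)$, or the statement that iterated cyclic sliding of an element conjugate to a rigid braid eventually becomes rigid --- and you correctly derive $SC(x)\subseteq R$ from either of these. Your deduction from the second statement is clean: a periodic forward orbit that hits a fixed point must be constant. What you do not do is prove either of those two inputs; you explicitly defer them to~\cite{gebhardtgm}. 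Since the paper itself defers the entire proposition to~\cite{gebhardtgm}, your write-up is strictly more informative than the paper's, but it is still an outline rather than a self-contained proof. If you intend it to stand alone, the missing step is precisely the one you flag: showing that the existence of a single rigid conjugate forces every sliding-periodic conjugate to be rigid.
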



The following results are also proven in~\cite{gebhardtgm} (independently of the existence of a rigid conjugate):


\begin{defi}\cite{gebhardtgm}
Let~$x\in G$ and $y\in SC(x)$. A simple, non-trivial element~$s$ of~$G$ is said to be a \emph{minimal arrow} for~$y$ if $y^s\in SC(x)$ and if the only positive prefixes~$t$ of~$s$ with $y^t\in SC(x)$ are $t=1$ and $t=s$.
%
\end{defi}

\begin{prop}\cite{gebhardtgm}\label{cyclage}
Let~$x\in G$.
\begin{itemize}
\item[(i)] The set $SC(x)$ is stable under conjugation by $\Delta$, by cycling, and by decycling.
\item[(ii)] (See also \cite{bggm2}). For every $y\in SC(x)$, the minimal arrows for~$y$ are prefixes of~$\iota(y)$ or of~$\partial(\varphi(y))$.
\end{itemize}
%
\end{prop}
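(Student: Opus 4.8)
The plan is to prove the two parts by related but distinct mechanisms. For (i) I would show that cyclic sliding \emph{commutes} with each of conjugation by~$\Delta$, cycling and decycling; since $SC(x)$ is exactly the set of $\mathfrak s$-periodic points lying in the conjugacy class of~$x$, any operation which commutes with~$\mathfrak s$ and preserves the conjugacy class also permutes these periodic points, and hence preserves $SC(x)$. For (ii) I would exhibit $\iota(y)$ and $\partial(\varphi(y))$ as elements of the set $S_y:=\{s\ \text{simple}\mid y^{s}\in SC(x)\}$, prove that $S_y$ is closed under~$\wedge$, and deduce that every minimal arrow, being an atom of $S_y$, is forced below one of these two elements.

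Conjugation by~$\Delta$ is the easy case of (i). As $\tau$ is a lattice automorphism fixing~$\Delta$, it commutes with $\iota$, $\varphi$, $\partial$ and~$\wedge$, so that $\mathfrak p(\tau(y))=\tau(\mathfrak p(y))$, and a one-line computation then gives $\mathfrak s\circ\tau=\tau\circ\mathfrak s$; thus $\mathfrak s^{k}(y)=y$ implies $\mathfrak s^{k}(y^{\Delta})=y^{\Delta}$. For cycling and decycling I would bring in the \emph{transport map}: for $y\in SSS(x)$ and a simple~$s$ with $y^{s}\in SSS(x)$, the element $s^{(1)}:=\mathfrak p(y)^{-1}s\,\mathfrak p(y^{s})$ is again simple and satisfies $\mathfrak s(y)^{s^{(1)}}=\mathfrak s(y^{s})$. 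The crucial identity is $\iota(y)^{(1)}=\iota(\mathfrak s(y))$; granting it and applying the transport relation to $s=\iota(y)$ (so that $y^{s}=\mathbf c(y)\in SSS(x)$) yields $\mathfrak s(\mathbf c(y))=\mathbf c(\mathfrak s(y))$. Hence cycling commutes with~$\mathfrak s$ on $SSS(x)$, so it preserves $\mathfrak s$-periodicity and $\mathbf c(SC(x))\subseteq SC(x)$. The dual identity for~$\varphi$, combined with $\varphi(y)^{-1}=\partial(\varphi(y))\Delta^{-1}$, handles decycling in the same way.

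For (ii), part~(i) immediately gives $\iota(y)\in S_y$ (since $y^{\iota(y)}=\mathbf c(y)\in SC(x)$) and $\partial(\varphi(y))\in S_y$ (since $y^{\partial(\varphi(y))}=\tau(\mathbf d(y))\in SC(x)$, using $\varphi(y)^{-1}=\partial(\varphi(y))\Delta^{-1}$ together with the $\Delta$- and decycling-stability of $SC(x)$). Assuming the convexity of $SC(x)$, i.e.\ that $S_y$ is closed under~$\wedge$, the proof concludes formally: for a minimal arrow~$s$ it is enough to find one of $\iota(y),\partial(\varphi(y))$ whose meet with~$s$ is nontrivial. Indeed $s\wedge\iota(y)\in S_y$ is a prefix of~$s$, so by minimality of the arrow~$s$ it equals~$1$ or~$s$; if it equals~$s$ then $s\preccurlyeq\iota(y)$, and symmetrically for $\partial(\varphi(y))$. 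It therefore remains to rule out a nontrivial $s\in S_y$ with $s\wedge\iota(y)=s\wedge\partial(\varphi(y))=1$.

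The main obstacle is precisely this last point, together with the convexity on which it rests; both I would establish through the transport map. Since $SC(x)$ is finite and~$\mathfrak s$ is periodic on it, transporting a conjugator $s\in S_y$ repeatedly around the sliding circuit of~$y$ produces an eventually periodic sequence of simple conjugators, and the compatibility of transport with~$\preccurlyeq$ and with~$\wedge$ is what makes $S_y$ a sublattice and forces the stable transport of a minimal arrow to be a prefix of~$\iota$ or of~$\partial\varphi$ at some point of the circuit; pulling this back around the circuit returns a prefix of $\iota(y)$ or $\partial(\varphi(y))$. I expect the two computational kernels, namely the identity $\iota(y)^{(1)}=\iota(\mathfrak s(y))$ and the monotonicity of transport under~$\preccurlyeq$, to carry essentially the whole argument.
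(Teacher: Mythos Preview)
The paper does not supply a proof of this proposition: it is quoted from \cite{gebhardtgm} (and, for part~(ii), from \cite{bggm2}) without argument. So there is no in-paper proof to compare your proposal against; what one can do is measure your outline against the proofs in those references.

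Your strategy is broadly the right one and matches the architecture of \cite{gebhardtgm}: stability of $SC(x)$ under $\tau$ is immediate; stability under cycling and decycling is obtained via the sliding-transport $s^{(1)}=\mathfrak p(y)^{-1}s\,\mathfrak p(y^{s})$; and part~(ii) is deduced from convexity (closure of $S_y$ under $\wedge$) together with a transport argument around the sliding circuit. The two ``computational kernels'' you isolate --- the identity $\iota(y)^{(1)}=\iota(\mathfrak s(y))$ and the compatibility of transport with $\preccurlyeq$ and $\wedge$ --- are indeed the load-bearing lemmas in the cited papers.

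That said, your write-up leaves exactly the hard parts as gestures. Two points deserve flagging. First, the claim that cycling literally \emph{commutes} with cyclic sliding on $SSS(x)$ (equivalently, your identity $\iota(y)^{(1)}=\iota(\mathfrak s(y))$) is not a triviality and is not quite how \cite{gebhardtgm} proceeds; there one rather shows that iterated transport of $\iota(y)$ is eventually periodic and uses this to place $\mathbf c(y)$ back in $SC(x)$, without asserting strict commutation at every step. Second, for~(ii) the genuinely delicate step is the one you postpone to the last paragraph: ruling out a minimal $s$ with $s\wedge\iota(y)=s\wedge\partial(\varphi(y))=1$. Your sentence ``the stable transport of a minimal arrow is forced to be a prefix of~$\iota$ or of~$\partial\varphi$ at some point of the circuit'' is precisely the statement to be proved, and the ``pulling back'' requires knowing that transport \emph{reflects} the prefix relation, not only preserves it. As written, this is a correct roadmap but not yet a proof.
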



\begin{defi}\cite{gebhardtgm}
To every element~$x$ of~$G$ we associate a connected, oriented graph $SCG(x)$ describing the set $SC(x)$ as follows:
\begin{itemize}
\item the graph has one vertex for every element of $SC(x)$,
\item for every element $y$ of $SC(x)$ and every minimal arrow~$s$ for~$y$, the graph $SCG(x)$ has an oriented edge from the vertex~$y$ to the vertex~$y^s$. This edge is labelled~$s$.
%
\end{itemize}
\end{defi}

When~$x$ has a rigid conjugate, the graph $SCG(x)$ has a particularly elegant structure, which we describe now. For the rest of this subsection we shall always suppose that~$x$ is rigid.

\begin{defi}
Let $x\in G$ be a rigid element, and let $y\in SC(x)$. The \emph{orbit} of~$y$ is the set
$O_y=\{\tau^{k}\mathbf c^l(y)\; |\; k,l\in \mathbb N\}$.
%
\end{defi}

\begin{lem}\label{orbit}
Let $x\in G$ be a rigid element, and let $y\in SC(x)$. Denote $p=\inf(x)$ and $r=\ell(x)$.
\begin{itemize}
\item[(i)] The orbit~$O_y$ is a subset of $SC(x)$.

\item[(ii)] The orbit~$O_y$ is stable under cycling, decycling, and $\tau$; in particular, for every~$z\in O_y$, $z^{\iota(z)}$ and $z^{\partial(\varphi(z))}$
are element of~$O_y$.

\item[(iii)] Let $y_1,y_2\in SC(x)$. Then $O_{y_1}\neq O_{y_2}$
if and only if $O_{y_1}\cap O_{y_2}=\emptyset$.

\item[(iv)] The cardinality of the orbit~$O_y$ is bounded above by~$f \cdot\ell(y)$, where~$f$ is the order of~$\tau$).
\end{itemize}
\end{lem}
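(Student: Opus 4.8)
The plan is to reduce all four statements to a single observation: on rigid elements, cycling, decycling and conjugation by~$\Delta$ interact in an especially tame way, namely cycling merely permutes the factors of the normal form cyclically, up to a twist by~$\tau$. Write $x=\Delta^p x_1\cdots x_r$ with $p=\inf(x)$ and $r=\ell(x)$. Rigidity says that $\varphi(x)\iota(x)=x_r\tau^{-p}(x_1)$ is left-weighted; since every consecutive pair $x_i x_{i+1}$ is already left-weighted, the word $\Delta^p x_2\cdots x_r\tau^{-p}(x_1)$ is in normal form, so $\mathbf c(x)=\Delta^p x_2\cdots x_r\tau^{-p}(x_1)$ is again rigid. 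Iterating, and keeping track of the $\tau$-twists produced when the leading factor is moved across~$\Delta^p$, a direct normal-form computation yields the two identities on which the whole lemma rests:
\[
\mathbf d(\mathbf c(x))=x \qquad\text{and}\qquad \mathbf c^{\,r}(x)=\tau^{-p}(x).
\]
The same computation applies verbatim to every element of the orbit, since each is rigid with the same $\inf=p$ and $\ell=r$.

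First I would record these identities together with the facts, recalled from~\cite{gebhardtgm}, that $SC(x)$ is stable under $\tau$, $\mathbf c$ and $\mathbf d$, and that $\tau$ has finite order~$f$ and commutes with $\mathbf c$. Part~(i) is then immediate: $O_y$ is generated from $y\in SC(x)$ by repeated application of $\tau$ and $\mathbf c$, both of which preserve $SC(x)$. For part~(iv), the identity $\mathbf c^{\,r}=\tau^{-p}$ lets me write $l=qr+l'$ with $0\leqslant l'<r$ and obtain $\tau^{k}\mathbf c^{\,l}(y)=\tau^{k-pq}\mathbf c^{\,l'}(y)$, after which $\tau^{f}=\mathrm{id}$ reduces the $\tau$-exponent modulo~$f$; hence every element of $O_y$ equals $\tau^{k}\mathbf c^{\,l}(y)$ with $0\leqslant k<f$ and $0\leqslant l<r$, giving $|O_y|\leqslant f\cdot r=f\cdot\ell(y)$.

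For part~(ii), stability under $\tau$ and $\mathbf c$ is clear from the definition of $O_y$, and the first displayed identity shows $\mathbf d=\mathbf c^{-1}$ on rigid elements, so $\mathbf d=\tau^{p}\mathbf c^{\,r-1}$ preserves $O_y$ as well. The two ``in particular'' statements then follow by unwinding definitions: $z^{\iota(z)}=\mathbf c(z)\in O_y$, and since $\partial(\varphi(z))=\varphi(z)^{-1}\Delta$ one computes $z^{\partial(\varphi(z))}=\tau(\mathbf d(z))$, which lies in $O_y$ by its stability under $\tau$ and $\mathbf d$. For part~(iii) I would observe that $\tau$ and $\mathbf c$ are commuting \emph{bijections} of the finite set $SC(x)$ (they are invertible by the identities above), so the sets $O_y$ are exactly the orbits of the finite abelian group they generate; orbits of a group action are either equal or disjoint, which is precisely the assertion.

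The main obstacle is the verification of the two displayed identities, that is, checking that rigidity really is preserved under cycling and that iterated cycling produces exactly the twist $\tau^{-p}$. This is where one must be careful with the bookkeeping of $\tau$ coming from commuting $\Delta^p$ past the simple factors, and it is the only place where the hypothesis that $x$ is rigid (rather than merely super summit) is genuinely used; once these identities are in hand, everything else is formal.
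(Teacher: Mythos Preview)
Your proof is correct and follows essentially the same line as the paper's: both hinge on the identities $\mathbf c^{\,r}=\tau^{-p}$ and $\mathbf d=\tau^{p}\mathbf c^{\,r-1}$ on rigid elements, and the remaining deductions match almost verbatim. Your treatment of~(iii) via the group-orbit observation (that $\tau$ and $\mathbf c$ are commuting finite-order bijections of $SC(x)$) is a mild streamlining of the paper's explicit index chase, but it is the same argument in substance.
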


The results of Lemma~\ref{orbit} are well-known to the experts. For the sake of completeness, we nevertheless give a proof.

\begin{proof}
(i) This is a consecquence of Proposition~\ref{cyclage}~(i).

(ii) The stability of~$O_y$ under cycling and conjugation by~$\Delta$ follows immediately from the definition (since~$\mathbf c$ and~$\tau$ commute). Let us now prove that~$\mathbf d(z)\in O_y$ whenever $z\in O_y$. Let $\Delta^p z_1\ldots z_r$ be the normal form of~$z$. Then $\mathbf d(z)=\Delta^p\tau^p(z_r)z_1\ldots z_{r-1}$, and this expression is in normal form since~$z$ was rigid. Moreover, we observe that $\mathbf d(z)=\tau^p(\mathbf c^{r-1}(z))$ (still due to the rigidity), and this last element belongs to~$O_y$ by definition. For the second part of the statement, we only need to recall that $z^{\iota(z)}=\mathbf c(z)$ and that $z^{\partial(\varphi(z))}=\tau(\mathbf d(z))$.


{\bf{Remark.}} Let~$f$ be the order of the automorphism~$\tau$. For any $m\in \mathbb N$ and any $z\in SC(x)$ we now observe that $\mathbf c^{rm}(z)=\tau^{-pm}(z)$, and in particular
$$\mathbf c^{rfm}(z)=z.$$

(iii) It suffices to prove that for every $y\in SC(x)$ and every $z\in O_y$, $O_z=O_y$.
The inclusion $O_z\subset O_y$ holds by part~(ii). Conversely, let $k,l\in \mathbb N$ such that $z=\tau^k\mathbf c^l(y)$; we shall prove that~$y\in O_z$. Let~$j$ be the smallest positive integer satisfying~$rfj>l$. Then due to the above remark we have
$$\mathbf c^{rfj-l}(\tau^{k(f-1)}(z))=y$$ so~$y\in O_z$.
Finally by~(ii), $O_y\subset O_z$, as desired.

(iv) We consider the~$r$ first successive cyclings of the~$f$ elements $y, \tau(y),\ldots, \tau^{f-1}(y)$. This yields at most~$rf$ elements of~$O_y$. We claim that it yields \emph{all} elements of~$O_y$. Indeed, let $z=\tau^k(\mathbf c^l(y))$. Let $m\in \mathbb N$ be such that $rm\leqslant l<r(m+1)$. Then $l-rm\in [0,\ldots, r-1]$, and
$$z=\tau^k(\mathbf c ^{rm}(\mathbf c^{l-rm}(y)))=\tau ^{k-pm}(\mathbf c^{l-rm}(y))$$
where the second equality follows from the above remark.
Thus~$z$ occurs as one of the first~$r$ cyclings of one of $y,\tau(y),\ldots, \tau^{f-1}(y)$.
\end{proof}

We have proven that the relation~$\sim$, defined by $x\sim y$ if and only if~$O_x=O_y$, is an equivalence relation on~$SC(x)$, and~$SC(x)$ is the disjoint union of the different orbits~$O_y$. We denote $\widetilde{SC}(x)$ the quotient set $SC(x)/\hspace{-2mm}\sim$. We now associate a ``quotient graph'' $\widetilde{SCG}(x)$ to $\widetilde{SC}(x)$ in the same way as~$SCG(x)$ is associated to~$SC(x)$. In order to do this rigorously, we need the following definition:

\begin{defi}
Let $y\in SC(x)$ be rigid, and let~$s$ be a minimal arrow for~$y$. We say~$s$ is a minimal \emph{useful} arrow if $y^s\notin O_y$.
\end{defi}

\begin{remark}\label{strict}\rm
According to Proposition~\ref{cyclage}~(ii) and Lemma~\ref{orbit}~(ii), the minimal useful arrows for~$y$ are \emph{strict} prefixes of~$\iota(y)$ or of~$\partial(\varphi(y))$.
\end{remark}

We recall the notion, due to Gebhardt~\cite{gebhardt}, of the \emph{transport under cycling} of an arrow: if $y, s\in G$, we define the transport under cycling of~$s$ at~$y$ by the formula $s_y^{(1)}=\iota(y)^{-1}s\iota(y^s)$. It is known (\cite{gebhardt}, Corollary 2.7) that the transport induces a bijection between the set of minimal arrows for $y\in SC(x)$ and the set of minimal arrows for~$\mathbf c(y)$. Similarly, conjugation by~$\Delta$ induces a bijection between the minimal arrows for~$y$ and the minimal arrows for~$\tau(y)$. In particular, if~$s$ is a minimal useful arrow between~$y$ and~$y^s$, then $s^{(1)}_{y}$ is a minimal useful arrow between~$\mathbf c(y)$ and~$\mathbf c(y^s)$, and~$\tau(s)$ is a minimal useful arrow between~$\tau(y)$ and $\tau(y^s)$. Thus we can define the desired quotient graph without any ambiguity (i.e.\ the arbitrary choices made in the following definition do not matter):


\begin{defi}
To every rigid element~$x$ of~$G$ we associate a connected, oriented graph~$\widetilde{SCG}(x)$ as follows:
\begin{itemize}
\item The vertices of the graph correspond to elements of $\widetilde{SC}(x)$,
\item For every element $O_y\in \widetilde{SC}(x)$, we arbitrarily choose a representative~$y'$ of~$O_y$. Now to any minimal useful arrow~$s$ for~$y'$, from~$y'\in O_y$ to~$z'\in O_z$, we associate an edge of the graph, oriented from~$O_y$ to~$O_z$.
\end{itemize}
\end{defi}

In order to bound the size of~$SC(x)$, it suffices to bound the number of vertices of~$\widetilde{SCG}(x)$: if $\widetilde{SCG}(x)$ has~$k$ vertices, then the cardinality of~$SC(x)$ is at most $k\cdot f \cdot \ell(x)$ (due to Lemma~\ref{orbit}~(iv)).


\subsection{The dual structure of $B_4$}\label{dual4}

A detailed account of the dual Garside structure on braid groups can be found in the original article~\cite{bkldual}, and an introduction in Chapter VIII of~\cite{ddrw}.
%
We restrict ourselves here to a brief description of this structure in the case of the four-strand braid group~$B_4$. We consider the sub-monoid $BKL_4^{+}$ of~$B_4$ generated by the braids~$a_{p,q}$, $1\leqslant p<q\leqslant 4$, where
%
%
$$a_{p,p+1}=\sigma_p\ \text{ for }\  p=1,\ldots, 3,$$
%
$$a_{1,3}=\sigma_2^{-1}\sigma_1\sigma_2,$$
$$a_{2,4}=\sigma_3^{-1}\sigma_2\sigma_3,$$
$$a_{1,4}=\sigma_3^{-1}\sigma_2^{-1}\sigma_1\sigma_2\sigma_3.$$
The notation $BKL$ is derived from the names of the discoverers of this structure: Birman, Ko and Lee. The monoid~$BKL_4^+$ induces a partial order relation on~$B_4$: $x\preccurlyeq y$ if and only if $x^{-1}y\in BKL_4^{+}$. Taking as Garside element the braid $\delta=\sigma_1\sigma_2\sigma_3$, these data give rise to a new Garside structure, which we denote $BKL_4$. For instance, we shall write $x\in BKL_4$ in order to say that~$x$ is a four-strand braid seen in the structure~$BKL_4$, and given as a product of the generators~$a_{i,j}$.

We now introduce some notation concerning the $BKL_4$-structure which we shall be using for the rest of the article. We recall that~$B_4$ can be seen as the mapping class group of the four times punctured disk~$\mathbb D_4$. In the context of the $BKL_4$-structure it is practical to parametrize~$\mathbb D_4$ as the unit disk in~$\mathbb C$ with punctures $P_j=\frac{1}{2}e^{-\frac{i(2j-1)\pi}{4}}$, for $j=1,\ldots, 4$. The braid~$a_{p,q}$ then corresponds to the counterclockwise half Dehn-twist along the arc $(P_p,P_q)$. Pictorially, we will represent the braid~$a_{p,q}$ by the segment $(P_p,P_q)$; for instance, $a_{2,4}$ is denoted~$\adiag$, $a_{1,4}$ is written~$\east$, and so on. Similarly, the braid which cyclically exchanges $P_3$, $P_2$ and $P_1$ by a counterclockwise movement is denoted~$\sw$. With this notation, the generators~$a_{p,q}$ are subject to the following relations:
%
%
$$\east\west=\west\east=\ew, \hspace{2mm} \north\south=\south\north=\ns,$$
$$\south\west=\west\mdiag=\mdiag\south=\sw, \hspace{2mm} \west\north=\north\adiag=\adiag\west=\nw,$$
$$\north\east=\east\mdiag=\mdiag\north=\ne, \hspace{2mm} \east\south=\south\adiag=\adiag\east=\se.$$
The Garside element is~$\delta=\square$. Conjugation by~$\delta$ corresponds to a one-quarter counterclockwise turn, and~$\tau$ is an automorphism of order~4 of~$B_4$. Therefore, Lemma~\ref{orbit}(iv), applied to the $BKL_4$-structure, states that the orbit of a rigid braid~$x$ contains at most $4\cdot\ell(x)$ elements.


Our proof of Theorem~\ref{cardinalbound} is based on the simplicity of the lattice of simple elements of~$BKL_4$. It has only~14 elements:

%
$$1,\south,\west,\north,\east,\mdiag,\adiag,\sw,\nw,\ne,\se,\ew,\ns,\delta.$$

The relations listed above are length-preserving. This allows us to define a morphism $\lambda\colon\thinspace B_4\longrightarrow \mathbb Z$ by sending every braid~$a_{p,q}$ to~1. For any braid~$x$, we call $\lambda(x)$ the \emph{weight} of~$x$. As already remarked in~\cite{bklsssbound}, $\lambda(\delta)=3$ and for any other simple nontrivial element~$s$ we have $\lambda(s)=1$ or~2. This observation yields a new quantity, in addition to canonical length, supremum and infimum, which is constant inside the Super Summit Set: 


\begin{lem}\label{invariantSSS}
Let $x\in BKL_4$, and let $y\in SSS(x)$. For every $z\in SSS(x)$, the normal form of~$z$ contains as many factors of weight~2 and as many factors of weight~1 as the normal form of~$y$.
%
\end{lem}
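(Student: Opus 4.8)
The plan is to exploit two facts that essentially make the statement a bookkeeping exercise: the weight $\lambda$ is a homomorphism into the \emph{abelian} group $\mathbb Z$, so it is automatically a conjugacy invariant, and the infimum and canonical length are constant throughout a Super Summit Set. Combining these with the constraint that each normal-form factor has weight $1$ or $2$ will leave no freedom in the two counts.

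First I would record that $\lambda$ is constant on $SSS(x)$. Every $z\in SSS(x)$ is conjugate to $x$, say $z=w^{-1}xw$, and since $\mathbb Z$ is abelian, $\lambda(z)=-\lambda(w)+\lambda(x)+\lambda(w)=\lambda(x)$. Next I would invoke Proposition~\ref{cyclagedecyclage}~(i): all elements of $SSS(x)$ share the same infimum $p$ and the same canonical length $r$, since they have simultaneously maximal infimum and minimal supremum and $r=\sup-\inf$. Thus for every $z\in SSS(x)$ the three numbers $p$, $r$ and $\lambda(z)=\lambda(x)$ are the same.

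Then I would establish the local observation that $\delta$ never occurs as a factor in the normal form $z=\delta^p z_1\cdots z_r$. If $z_1=\delta$ then $\delta^{p+1}\preccurlyeq z$, contradicting the maximality of $p$; and if some middle factor $z_j=\delta$, then left-weightedness of $z_{j-1}z_j$ forces $\partial(z_{j-1})\wedge\delta=\partial(z_{j-1})=1$, i.e.\ $z_{j-1}=\delta$, which propagates back to $z_1=\delta$ and the same contradiction. Hence each $z_i$ is a nontrivial proper simple element, so by the weight computation recalled just before the statement, $\lambda(z_i)\in\{1,2\}$. Writing $n_1$ and $n_2$ for the number of factors of weight $1$ and of weight $2$, applying $\lambda$ to the normal form gives
$$\lambda(x)=\lambda(z)=3p+n_1+2n_2,$$
while counting factors gives $n_1+n_2=r$. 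Solving this $2\times 2$ linear system yields $n_2=\lambda(x)-3p-r$ and $n_1=2r+3p-\lambda(x)$, both expressed purely in terms of the invariants $p$, $r$, $\lambda(x)$. Since these are the same for all $z\in SSS(x)$, the counts $n_1$ and $n_2$ are independent of $z$, and in particular agree with those for $y$.

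I do not expect any real obstacle here: the argument is a short linear-algebra deduction. The only step requiring genuine care is the third one, namely verifying that $\delta$ (the unique simple element of weight $3$) cannot appear as a normal-form factor; this is what guarantees that every factor contributes weight $1$ or $2$, so that the two linear equations $n_1+n_2=r$ and $n_1+2n_2=\lambda(x)-3p$ suffice to pin down \emph{both} counts rather than merely their sum.
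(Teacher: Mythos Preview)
Your argument is correct and is essentially the same as the paper's: both derive the two linear equations $k_1+k_2=\ell(x)$ and $k_1+2k_2=\lambda(x)-3\inf(x)$ and observe that $\ell$, $\inf$ and the homomorphism $\lambda$ are conjugacy/SSS invariants. The only difference is that you spell out why no normal-form factor can equal $\delta$, a standard fact about left normal forms that the paper's proof takes for granted.
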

\begin{proof}
For every braid $x\in BKL_4$, if $k_1$ is the number of factors of weight~1 and~$k_2$ the number of factors of weight~2 in the normal form of~$x$, then $\ell(x)=k_1+k_2$ and $\lambda(x)=3\inf(x)+2k_2+k_1$. Thus~$k_1$ and~$k_2$ are constant in the Super Summit Set, since canonical length, weight, and infimum are constant there.
\end{proof}

The following very simple remark will turn out to be very useful:

\begin{remark}\label{lem}\rm
Let~$a$ and~$b$ be two simple elements for~$BKL_4$. If~$a$ is of weight~2 and~$\delta$ does not divide the product~$ab$, then~$a.b$ is in normal form.
\end{remark}

We finally claim that in the $BKL_4$ structure, the existence of a minimal useful arrow~$s$ from~$y'\in O_y$ to $z'\in O_z$ is equivalent to the existence of a minimal useful arrow from~$z'\in O_z$ to some element of~$O_y$. (In other words, every edge in~$\widetilde{SCG}(x)$, for $G=BKL_4$, is oriented both ways.) Let us prove this claim. According to Remark~\ref{strict}, such a minimal arrow~$s$ is a \emph{strict} prefix either of~$\iota(y')$ or of~$\partial(\varphi(y'))$. In particular,~$\lambda(s)=1$. Now there is an arrow, which is of weight~1 and thus minimal, given in the first case by~$s^{-1}\iota(y')$ from~$z'$ to $\mathbf c(y')$, and in the second case by~$s^{-1}\partial(\varphi(y'))$, from~$z'$ to~$\tau\mathbf d(y')$.



\section{Proof of Theorem \ref{cardinalbound}}\label{ProofTheorem2}

Throughout this section, we use the dual Garside structure on~$B_4$. Our aim is to prove Theorem~\ref{cardinalbound}, so we consider a rigid pseudo-Anosov braid~$x$, and we try to bound the size of $SC(x)$. The hypothesis that~$x$ is pseudo-Anosov implies that the canonical length of~$x$ is strictly larger than~1 (this can be proven by analysing all braids with canonical length~1).

We shall see that it suffices to prove Theorem~\ref{cardinalbound} separately in three special cases, which are defined in terms of the simple factors occurring in~$x$ (see Section~\ref{dual4}). We will consider successively the following three cases:
%
%
\begin{itemize}
\item The normal form of~$x$ contains at least one factor of weight~1 and one factor of weight~2 -- this case is solved in Proposition~\ref{P:FacteursPoids2}. (Notice that all other elements of~$SC(x)$ will have the same property, by Lemma~\ref{invariantSSS}).
\item There exists an element~$y$ of~$SC(x)$ such that all the factors other than~$\delta$ occurring in the normal form of~$y$ belong to~$\{\west,\north,\east,\south\}$ -- this case is solved in~Proposition \ref{T:RigidEdge}.
\item For every element~$y$ of~$SC(x)$, all the factors other than~$\delta$ occurring in the normal form of~$y$ are of weight~1, and at least one of them is~$\mdiag$ or~$\adiag$ -- this case is solved in Proposition~\ref{linesize}.
\end{itemize}
In the first two cases, the hypothesis that~$x$ should be pseudo-Anosov is in fact unnecessary. In these cases, we even construct a \emph{linear} bound on~$\#SC(x)$. The third case requires much more sophisticated techniques, and gives rise to an example showing that the quadratic bound is optimal.


\subsection{A simple special case}

We now describe a simple special case where Theorem~\ref{cardinalbound} can be proved by elementary arguments.

\begin{prop}\label{P:FacteursPoids2}
Let~$x\in BKL_4$ be a rigid braid whose normal form contains at least one factor of weight~1 and at least one factor of weight~2. Then the set~$SC(x)$ consists only of~$O_x$ and in particular $\#SC(x)\leqslant 4\cdot\ell(x)$.
\end{prop}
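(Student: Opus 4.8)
The plan is to show that under the hypothesis, the graph $\widetilde{SCG}(x)$ has only a single vertex, so that $SC(x)=O_x$ and the bound follows immediately from Lemma~\ref{orbit}~(iv). By the remarks at the end of Section~\ref{dual4}, a single vertex means there are no minimal \emph{useful} arrows: every minimal arrow $s$ for $y$ leads to $y^s\in O_y$. By Remark~\ref{strict}, any minimal useful arrow $s$ for a rigid $y\in SC(x)$ is a \emph{strict} prefix of $\iota(y)$ or of $\partial(\varphi(y))$, and in particular $\lambda(s)=1$; so it suffices to rule out the existence of such weight-$1$ strict prefixes that produce something outside $O_y$.

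First I would fix an arbitrary $y\in SC(x)$; by Lemma~\ref{invariantSSS} its normal form also contains at least one weight-$2$ factor and at least one weight-$1$ factor. Since $\iota(y)$ and $\varphi(y)$ are simple and their weights can be $1$ or $2$, I would go through the short list of $14$ simple elements of $BKL_4$ and record which ones have a nontrivial strict prefix that is itself simple of weight $1$: these are exactly the weight-$2$ simples ($\sw,\nw,\ne,\se,\ew,\ns$), each of which factors as a product of two weight-$1$ generators, together with $\delta$. A minimal useful arrow can therefore only exist when $\iota(y)$ or $\partial(\varphi(y))$ has weight $2$. The core of the argument is then a direct computation: taking such a weight-$1$ strict prefix $s$, I would conjugate $y$ by $s$, compute the normal form of $y^s$ using the length-preserving $BKL_4$ relations and Remark~\ref{lem}, and check that the weight profile $(k_1,k_2)$ is preserved (it must be, by Lemma~\ref{invariantSSS}) while rigidity forces $y^s$ back into $O_y$. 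The presence of \emph{both} a weight-$1$ and a weight-$2$ factor is what pins down the configuration tightly enough that no genuinely new conjugate can appear.

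The main obstacle I anticipate is the case analysis for how a weight-$1$ prefix $s$ of a weight-$2$ factor $\iota(y)$ (or $\partial(\varphi(y))$) interacts with the neighbouring factors of the normal form when one conjugates: one must verify that conjugation by such an $s$ amounts, up to the $\tau$-action and cycling, to the passage already accounted for inside $O_y$ rather than to a jump between orbits. Concretely, the delicate point is controlling the left-weightedness condition $\partial(s_1)\wedge s_2=1$ across the seam created by conjugation, which is where Remark~\ref{lem} (a weight-$2$ factor times a factor not divisible by $\delta$ is automatically normal) does the heavy lifting. Once it is checked that every candidate minimal arrow of weight $1$ stays within the orbit, $\widetilde{SCG}(x)$ is a single vertex, and $\#SC(x)=\#O_x\leqslant 4\cdot\ell(x)$ by Lemma~\ref{orbit}~(iv), as claimed.
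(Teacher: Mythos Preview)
Your overall strategy is right: showing that $\widetilde{SCG}(x)$ has a single vertex by ruling out minimal useful arrows, then invoking Lemma~\ref{orbit}(iv). You are also right that the only candidates for such arrows are weight-$1$ strict prefixes of $\iota(y)$ or $\partial(\varphi(y))$.

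But your anticipated mechanism is off. You expect that conjugating by such a weight-$1$ prefix $s$ will produce an element $y^s$ that lies \emph{back in $O_y$}; that is not what happens. What the paper actually proves is stronger and cleaner: for any strict prefix $t$ of $\iota(y)$ (or, by passing to $y^{-1}$, of $\partial(\varphi(y))$) with $y^t\in SSS(x)$, the conjugate $y^t$ is \emph{not rigid}, hence not in $SC(x)$ at all, so $t$ is not even a minimal arrow. The computation goes like this: since $\lambda(t)=1$ forces $\lambda(y_1)=2$, one lets $k$ be the first index with $\lambda(y_k)=1$ and tracks the normal form of $t^{-1}y_1\cdots y_r t$ through $k-1$ local slidings; the weight-$1$ factor $y_k$ gets absorbed, Remark~\ref{lem} shows the remainder is already normal, and one obtains $\varphi(y^t)=t$. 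Then $t^{-1}\iota(y)$ is a nontrivial common prefix of $\iota(y^t)$ and $\partial(\varphi(y^t))=\partial(t)$, so the pair $\varphi(y^t)\cdot\iota(y^t)$ fails left-weightedness. Thus the only minimal arrows for $y$ are $\iota(y)$ and $\partial(\varphi(y))$ themselves, both of which are non-useful by Lemma~\ref{orbit}(ii).

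So the gap in your proposal is not fatal, but you are looking for the wrong conclusion from the computation: you will not find $y^s\in O_y$, you will find $y^s\notin SC(x)$. Redirect the case analysis accordingly, and note that the trick of replacing $y$ by $y^{-1}$ lets you treat both the $\iota(y)$ and the $\partial(\varphi(y))$ cases at once rather than doubling the casework.
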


The proof of this proposition is based on the following lemma:

\begin{lem}
Let~$x\in BKL_4$ be a rigid braid whose normal form contains at least one factor of weight~1 and at least one factor of weight~2. Let $y\in SC(x)$. Then there is no strict prefix of~$\iota(y)$ or of $\partial(\varphi(y))$ which is a minimal arrow for~$y$.
\end{lem}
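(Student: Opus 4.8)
The plan is to argue by contradiction: assume some strict prefix $s$ is a minimal arrow for $y$, and show that this forces $y^s\notin SC(x)$. First I reduce the situation. Since $x$ is rigid, $SC(x)$ consists of the rigid conjugates of $x$, so $y$ is rigid; write its normal form $y=\delta^p y_1\ldots y_r$ with $p=\inf(y)$, $r=\ell(y)$, and recall that by Lemma~\ref{invariantSSS} the braid $y\in SSS(x)$ has the same number of weight-$1$ and weight-$2$ factors as $x$, so in particular $y$ has at least one factor of each weight. A nontrivial strict prefix of a simple element has weight~$1$ and exists only when that simple element has weight~$2$; thus I may assume $\iota(y)$ has weight~$2$ and $s=\tau^{-p}(s_0)$ with $s_0$ a weight-$1$ prefix of $y_1$ (the case of a prefix of $\partial(\varphi(y))$ is handled by the left--right symmetry of the Garside structure). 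Writing $y_1=s_0t_0$ with $t_0=s_0^{-1}y_1$ of weight~$1$, a short computation using the centrality of $\delta^2$ gives
\[
y^s=\delta^p\,t_0\,y_2\ldots y_r\,\tau^{-p}(s_0).
\]

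Next I record three facts. (a)~The final junction $(y_r,\tau^{-p}(s_0))$ is left-weighted: rigidity of $y$ gives $\partial(\varphi(y))\wedge\iota(y)=\partial(y_r)\wedge\iota(y)=1$, and since $\tau^{-p}(s_0)\preccurlyeq\iota(y)$ this forces $\partial(y_r)\wedge\tau^{-p}(s_0)=1$. (b)~The first factor of the normal form of $y^s$ is the maximal simple prefix of $t_0y_2\ldots y_r\tau^{-p}(s_0)$, hence is left-divisible by $t_0$. (c)~Since $s_0t_0=y_1$ is simple, $t_0\preccurlyeq\partial(s_0)$.

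Since $y\in SSS(x)$ we have $\ell(y^s)\geqslant r$, and I split on this value. If $\ell(y^s)>r$ then $y^s\notin SSS(x)\supseteq SC(x)$, so $s$ is not an arrow. If $\ell(y^s)=r$, then necessarily $\inf(y^s)=p$ (a smaller infimum together with $\sup(y^s)\geqslant p+r$ would force $\ell(y^s)>r$), so by (b) the leading factor is not $\delta$ and $\iota(y^s)=\tau^{-p}(\text{leading factor})\succcurlyeq\tau^{-p}(t_0)$. I then claim $\varphi(y^s)=\tau^{-p}(s_0)$. To see this I follow the normalization from left to right, repairing the single defective junction at the front. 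Once the leading factor has absorbed a weight-$1$ piece it becomes a weight-$2$ simple (it cannot become $\delta$, as that would raise the infimum above the maximal value $p$), and then Remark~\ref{lem} guarantees the next junction is already left-weighted. Consequently the defect travels rightward only across weight-$2$ factors, shrinking each to weight~$1$, until it reaches the first weight-$1$ factor, where the genuine merge occurs and the process stops. By the weight hypothesis such a weight-$1$ factor occurs among $y_2,\ldots,y_r$ (recall $y_1$ has weight~$2$), so the merge happens strictly before the appended factor $\tau^{-p}(s_0)$; hence the last factor, and therefore $\varphi(y^s)$, is exactly $\tau^{-p}(s_0)$. Now $\partial(\varphi(y^s))=\tau^{-p}(\partial(s_0))\succcurlyeq\tau^{-p}(t_0)$ by (c), while $\tau^{-p}(t_0)\preccurlyeq\iota(y^s)$; therefore $\mathfrak p(y^s)=\iota(y^s)\wedge\partial(\varphi(y^s))\succcurlyeq\tau^{-p}(t_0)\neq1$, so $y^s$ is not rigid and $y^s\notin SC(x)$. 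Either way $s$ fails to be a minimal arrow.

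The delicate point, and the only place the mixed-weight hypothesis is genuinely used, is the control of the normal-form computation when $\ell(y^s)=r$: one must be sure that the rewriting started at the front cannot reach the last factor $\tau^{-p}(s_0)$. The argument above reduces this to the observation that a weight-$2$ leading factor blocks further propagation (via Remark~\ref{lem}) except by passing through another weight-$2$ factor, so that the rewriting must be stopped by the first weight-$1$ factor; the existence of such a factor strictly inside $y_1\ldots y_r$ is precisely what the hypothesis provides. The symmetric statement for strict prefixes of $\partial(\varphi(y))$ follows by applying the same analysis in the reversed Garside structure.
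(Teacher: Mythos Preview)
Your argument is correct and follows essentially the same route as the paper's proof: reduce to a strict prefix of $\iota(y)$ (the paper does this by passing to $y^{-1}$, using $\iota(y^{-1})=\partial(\varphi(y))$, which is the same manoeuvre as your appeal to the reversed structure), then track the weight-$1$ ``defect'' $t_0$ rightwards through the weight-$2$ block until it is absorbed at the first weight-$1$ factor, conclude that $\varphi(y^s)=\tau^{-p}(s_0)$, and observe that $\tau^{-p}(t_0)$ is a common prefix of $\iota(y^s)$ and $\partial(\varphi(y^s))$, so $y^s$ is not rigid. The only cosmetic difference is that the paper assumes from the outset that $y^t\in SSS(x)$ (so your case $\ell(y^s)>r$ is subsumed), and the paper phrases the sliding as $(t_j^{-1}y_j)y_{j+1}\to y'_j(t_{j+1}^{-1}y_{j+1})$ with $\lambda(y'_j)=2$, which makes the use of Remark~\ref{lem} at the stopping point slightly more transparent than your phrase ``shrinking each to weight~$1$''.
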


\begin{proof}
By Lemma~\ref{invariantSSS}, the normal form of every element of~$SSS(x)$ (and thus of~$SC(x)$) contains at least one factor of weight~1 and at least one factor of weight~2. Therefore, if $\delta^py_1\ldots y_r$ is the normal form of~$y$, then there exist $k,l$ with $1\leqslant k,l\leqslant r$ such that~$\lambda(y_k)=1$ et $\lambda(y_l)=2$. Let~$t$ be a simple nontrivial braid such that $t\prec \tau^{-p}(y_1)$ or $t\prec \partial(y_r)$ and $y^t\in SSS(x)$. We shall prove that~$y^t$ cannot be rigid. Up to replacing~$y$ by its inverse, which is rigid with $\iota(y^{-1})=\partial(\phi(y))$, we can suppose that~$t\prec\tau^{-p}(y_1)$.


Necessarily, $\lambda(t)=1$ and $\lambda(y_1)=2$ (because a factor~$y_1$ with $\lambda(y_1)=1$ cannot have any strict prefix), so we can take $l=1$. Let us choose~$k$ as the smallest index of a simple factor of weight~1. Thus for $1\leqslant j<k$ we have $\lambda(y_j)=2$. Let us study the element
$$y^t=t^{-1}\delta^py_1\ldots y_rt.$$
Since~$y$ was rigid, the pair $y_r.t$ is in normal form.
Next, we are going to prove that $\varphi(y^t)=t$. Writing $t_1=\tau^p(t)$, the calculation of the normal form of~$y^t$ can be performed using $k-1$ successive local slidings:
%
$$(t_j^{-1}y_j)y_{j+1}=(t_j^{-1}y_jt_{j+1})(t_{j+1}^{-1}y_{j+1})=y'_j(t_{j+1}^{-1}y_{j+1})\ \text{ for}\ \ j=1,\ldots, k-1,$$
where $t_{j+1}=\partial(t_j^{-1}y_j)\wedge y_{j+1}$.
For all $j=1,\ldots,k-1$, we have $t_j\neq1$ (i.e.\ the pair $(t_j^{-1}y_j)y_{j+1}$ is not in normal form as written) because otherwise we'd obtain $\sup(y^t)>\sup(y)$. Moreover,~$\delta$ is not a prefix of $t^{-1}_1y_1\ldots y_rt$. Therefore, $\lambda(t_j)=1$ for $j=1,\ldots,k-1$.
Since $\lambda(y_k)=1$, we also have $t_{k}=y_k$, and $y'_{k-1}$ is of weight~2. Now by Remark~\ref{lem}, the pair $y'_{k-1}\cdot y_{k+1}$ (with $y_{k+1}=t$ if $k=r$) is in normal form.
This completes the proof of our claim that~$\varphi(y^t)=t$.

We can now prove that~$y^t$ is not rigid. Indeed,
%
$$\iota(y^t)\wedge \partial(\varphi(y^t))=\tau^{-p}(t_1^{-1}y_1t_2)\wedge \partial(t)=t^{-1}\iota(y)\tau^{-p}(t_2)\wedge \partial(t).$$
Thus~$t^{-1}\iota(y)$ is a nontrivial common prefix of~$\iota(y^t)$ and~$\partial(\varphi(y^t))$, so the pair $\varphi(y^t)\iota(y^t)$ is not left-weighted. In summary, the braid $y^t$ is not rigid and~$t$ was not a minimal arrow for~$y$.
\end{proof}

Now, the proof of Proposition~\ref{P:FacteursPoids2} is an immediate consequence of the preceding lemma and of Lemma~\ref{orbit}(iv).

Now, in order to prove Theorem~\ref{cardinalbound}, we have to find a quadratic bound on the size of $SC(x)$ for any rigid pseudo-Anosov braid~$x\in BKL_4$. By Proposition~\ref{P:FacteursPoids2}, we can restrict our attention to braids whose normal form has all its factors (other than~$\delta$) of the same weight (1 or 2). Up to considering inverses, we can restrict ourselves to the case of weight~1 (see \cite{bggm2}, Corollary 3.10).

So for the rest of the proof of Theorem~\ref{cardinalbound}, we can suppose that~$x$ is a rigid pseudo-Anosov braid whose normal form has only factors of weight~1 (i.e.\ $\south$, $\west$, $\north$, $\east$, $\mdiag$, $\adiag$), and $\delta^{\pm 1}$. By Lemma~\ref{invariantSSS}, all elements of~$SSS(x)$ have the same property. Moreover, using Remark~\ref{strict}, we see that for every $y\in SC(x)$, all possible minimal useful arrows for~$y$ are strict prefixes of~$\partial(\varphi(y))$ (there is no strict non-trivial prefix of~$\iota(y)$ because~$\lambda(\iota(y))=1$). In particular, all vertices of~$\widetilde{SCG}(x)$ have valence at most~3.


We make one more simple, but very useful general observation:

\begin{lem}\label{diag}
Suppose that the normal form of the rigid braid $y\in SC(x)$ has only factors of weight~1 (and~$\delta^{\pm 1}$), with at least one factor equal to~$\adiag$ or to~$\mdiag$. Then the vertex~$O_y$ of~$\widetilde{SCG}(x)$ is at most bivalent.
%
\end{lem}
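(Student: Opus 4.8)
The plan is to use the fact that the valence of a vertex of $\widetilde{SCG}(x)$ is an invariant of the whole orbit $O_y$, and then to replace $y$ by a cleverly chosen element of $O_y$ whose \emph{final} factor is a diagonal, since for such an element the bound $2$ is immediate. The point of the hypothesis is precisely that it guarantees the existence of such a representative.

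First I would recall why the valence is well defined on $\widetilde{SC}(x)$. As noted just before the definition of $\widetilde{SCG}(x)$, the transport under cycling is a bijection between the minimal (useful) arrows for $y$ and those for $\mathbf c(y)$, and conjugation by $\delta$ does the same between $y$ and $\tau(y)$; this is exactly what makes the quotient graph independent of the chosen representative. Consequently the number of minimal useful arrows emanating from a vertex may be computed at \emph{any} element of $O_y$, and that number is the valence of $O_y$.

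Next I would install a diagonal at the end. Write $y=\delta^p y_1\cdots y_r$ in normal form and let $y_i\in\{\mdiag,\adiag\}$ be a diagonal factor, which exists by hypothesis. Since $y$ is rigid, cycling simply rotates the factors (Lemma~\ref{orbit}), so $\mathbf c^{\,i}(y)$ is again rigid, lies in $O_y$, and has final factor $\varphi(\mathbf c^{\,i}(y))=\tau^{-p}(y_i)$. Because $\tau$ permutes the pair $\{\mdiag,\adiag\}$, this final factor is again a diagonal. Using the orbit-invariance of the valence, I may therefore replace $y$ by $\mathbf c^{\,i}(y)$ and assume from now on that $\varphi(y)\in\{\mdiag,\adiag\}$.

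Finally I would read off the bound. By Remark~\ref{strict} (coming from Proposition~\ref{cyclage}(ii)), every minimal useful arrow for $y$ is a strict prefix of $\iota(y)$ or of $\partial(\varphi(y))$; since all factors of $y$ have weight $1$, the simple element $\iota(y)$ has weight $1$ and hence no nontrivial strict prefix, so all arrows are strict (weight-$1$) prefixes of $\partial(\varphi(y))$. Now $\partial(\mdiag)=\ns$ and $\partial(\adiag)=\ew$, and each of $\ns,\ew$ has exactly two weight-$1$ divisors, namely $\{\north,\south\}$ and $\{\east,\west\}$ respectively. Thus $y$ admits at most two minimal useful arrows, so the vertex $O_y$ is at most bivalent. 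The only genuinely non-routine step is the reduction of the previous paragraph; the rest is the elementary observation that the right complement of a diagonal is one of the two ``straight'' simple elements, which are exactly the weight-$2$ simples with only two weight-$1$ divisors. The points to be checked carefully are that cycling a rigid, all-weight-$1$ braid really does cyclically permute its factors (so that $\varphi(\mathbf c^{\,i}(y))=\tau^{-p}(y_i)$), and that this reduction is legitimate because the valence is an orbit invariant — both of which are furnished by Lemma~\ref{orbit} and by the well-definedness of $\widetilde{SCG}(x)$ recalled above.
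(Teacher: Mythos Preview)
Your proof is correct and follows essentially the same approach as the paper: replace $y$ by an element of its orbit whose final factor is a diagonal, then observe that the right complement of a diagonal has only two strict positive prefixes. The paper's version is terser (it simply asserts one may take $\varphi(y)=\mdiag$ and notes $\partial(\mdiag)=\ns$ has two strict positive prefixes), but your more explicit justification of the orbit-invariance of the valence and of the cycling reduction fills in exactly the details the paper leaves implicit.
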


\begin{proof}
Up to replacing~$y$ by~$y'\in O_y$, we can suppose that~$\varphi(y)=\mdiag$.
But $\partial(\mdiag)=\ns$, and this simple element has only two strict positive prefixes.
\end{proof}

We split the rest of our argument into two parts. In Subsection~\ref{calculatoire}, we study the case where~$SC(x)$ contains an element that does not satisfy the hypotheses of Lemma~\ref{diag}, i.e. an element whose normal form contains, apart from~$\delta^{\pm 1}$, only the letters from $\{\south,\west,\north,\east\}$; we shall denote the latter set~$\mathcal E$. By contrast, Subsection~\ref{USSbivalent} deals with the case where all elements of~$SC(x)$ satisfy the hypotheses of Lemma~\ref{diag}.


\subsection{Some element of~$SC(x)$ has all its factors in~$\mathcal E$}\label{calculatoire}

We recall the notation $\mathcal E=\{\west,\north,\east,\south\}$. Our aim in this subsection is to prove the following result, whose proof is elementary but involves a lot of careful case-checking:


\begin{prop}\label{T:RigidEdge}
Let~$x\in BKL_4$ be a rigid braid. Let us suppose that~$SC(x)$ contains some element~$y$ whose normal form has all of its factors (apart from~$\delta^{\pm 1}$) belonging to~$\mathcal E$. Then the graph~$\widetilde{SCG}(x)$ has at most six vertices. Moreover, $\#SC(x)\leqslant 24\cdot\ell(x)$.
\end{prop}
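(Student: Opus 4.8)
The plan is to work entirely inside the orbit of the given element $y$ and to exploit the fact, established above, that when all factors (other than $\delta^{\pm 1}$) lie in $\mathcal E=\{\west,\north,\east,\south\}$, every minimal useful arrow is a \emph{strict} prefix of $\partial(\varphi(y))$ and hence has weight~1. Since each $\varphi(y)\in\mathcal E$ has weight~1, its right complement $\partial(\varphi(y))$ is a simple element of weight~2, and I would begin by tabulating, for each of the four possible values of $\varphi(y)$, exactly which strict nontrivial prefixes $\partial(\varphi(y))$ admits. Using the relations listed in Section~\ref{dual4}, each such $\partial(\varphi(y))$ is one of $\ew,\ns,\sw,\nw,\ne,\se$, and one reads off that it has at most two strict weight-$1$ prefixes. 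Combined with the already-noted fact that $\iota(y)$ has no nontrivial strict prefix (it has weight~1), this shows each vertex $O_y$ has valence at most~$2$, and in fact the two possible outgoing arrows are completely determined by the pair $(\iota(z),\varphi(z))$ as $z$ ranges over $O_y$.

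The heart of the argument is then a finite case analysis. First I would record, using the stability of $O_y$ under cycling and $\tau$ (Lemma~\ref{orbit}(ii)), that within a single orbit the \emph{set} of pairs $(\iota(z),\varphi(z))$ is an invariant of the orbit up to the $\tau$-action (a quarter turn), so that up to the symmetry $\tau$ there are only finitely many ``orbit types'' determined by which letters of $\mathcal E$ can appear as initial and final factors. For each candidate arrow $s$ (a strict weight-$1$ prefix of $\partial(\varphi(y))$), I would compute the conjugate $y^s$ in normal form, read off its new initial and final factors, and thereby identify the orbit $O_z$ at the head of the arrow. The claim that $\widetilde{SCG}(x)$ has at most six vertices will follow by showing that, starting from any one orbit type and repeatedly following all available arrows (there are at most two per vertex, and every edge is two-way by the final claim of Section~\ref{dual4}), one never reaches more than six distinct orbit types before the graph closes up. Once the bound of six vertices is in hand, the cardinality estimate is immediate from Lemma~\ref{orbit}(iv): each orbit has at most $4\cdot\ell(x)$ elements, so $\#SC(x)\leqslant 6\cdot 4\cdot\ell(x)=24\cdot\ell(x)$.

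The main obstacle is the bookkeeping in the case analysis: correctly computing each conjugate $y^s$ in normal form requires tracking how the strict prefix $s$ propagates through the word via successive local slidings (exactly as in the proof of the lemma preceding Proposition~\ref{P:FacteursPoids2}), and verifying that the result is again rigid with all factors in $\mathcal E$ so that it genuinely lies in $SC(x)$. I expect the only subtlety to be ensuring that no arrow escapes the class of ``all factors in $\mathcal E$'' — one must check that conjugating by a weight-$1$ prefix of $\partial(\varphi(y))$ cannot introduce a factor $\mdiag$ or $\adiag$; this is where the explicit multiplication table of the fourteen simple elements, together with Remark~\ref{lem}, does the work. Since the geometry is governed by the quarter-turn symmetry $\tau$ and the four letters of $\mathcal E$ are cyclically permuted by it, the analysis reduces to a single base case together with its three rotations, keeping the total amount of case-checking manageable, and the pseudo-Anosov hypothesis is indeed never used.
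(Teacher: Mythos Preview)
Your proposal contains two concrete errors that make the approach fail as written.

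First, your valence bound is wrong. You claim that for $\varphi(y)\in\mathcal E$, the complement $\partial(\varphi(y))$ ``has at most two strict weight-$1$ prefixes'', so that each vertex of $\widetilde{SCG}(x)$ has valence at most~$2$. But in the $BKL_4$ lattice the complement of any edge in~$\mathcal E$ is a \emph{triangle}, not one of $\ew$ or $\ns$. For instance $\partial(\west)=\ne=\north\east=\east\mdiag=\mdiag\north$, which has the \emph{three} atomic prefixes $\north,\east,\mdiag$. So a vertex $O_y$ with all factors in~$\mathcal E$ can be (and, as the paper shows in Lemma~\ref{SommetTrivalent}, typically is) \emph{trivalent}. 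Your picture of $\widetilde{SCG}(x)$ as a line or circle of orbit types collapses.

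Second, your expectation that ``conjugating by a weight-$1$ prefix of $\partial(\varphi(y))$ cannot introduce a factor $\mdiag$ or $\adiag$'' is false. The explicit computations in the paper's Lemma~\ref{SommetTrivalent} show that the conjugates $y^{\north}$, $y^{\east}$, $y^{\mdiag}$ almost always contain factors $\mdiag$ and~$\adiag$; whether they do depends on the exponents in the normal form of~$y$. This is not a bookkeeping nuisance but the crux of the argument: the paper bounds the graph by showing (Lemma~\ref{P:PointExtremal}) that any vertex adjacent to an all-$\mathcal E$ vertex and containing a $\mdiag$ or $\adiag$ factor is \emph{monovalent}, and (Lemma~\ref{PointExtremal}) that there is no chain of three all-$\mathcal E$ vertices. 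The bound of six vertices then comes from at most two trivalent all-$\mathcal E$ vertices with their remaining neighbours being leaves. Your proposed case analysis on ``orbit types up to~$\tau$'' cannot succeed without first correcting both points above; once corrected, you will essentially be forced into the paper's argument.
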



%
%
%

The last sentence of Proposition~\ref{T:RigidEdge} follows immediately from the preceding one, together with Lemma~\ref{orbit}~(iv).

First we note that in order to prove Proposition~\ref{T:RigidEdge}, it suffices to prove that for some non-zero integer $m\in\mathbb N$ the graph~$\widetilde{SCG}(x^m)$ has at most 6 vertices. Indeed, since all braids in~$SC(x)$ are rigid, there is an injection from~$\widetilde{SC}(x)$ to~$\widetilde{SC}(x^m)$, sending an orbit~$O_y$ to an orbit~$O_{y^m}$.


So possibly after replacing~$x$ by~$x^4$, we can suppose that $\inf(x)$ is a multiple of~4. In fact, since for any integer~$m$, multiplication by~$\delta^{4m}$ induces an isomorphism between~$SC(x)$ and~$SC(\delta^{4m}x)$, we can even suppose that $\inf(x)=0$ (and thus that the infimum of any element of~$SSS(x)$ is zero).


So for the rest of the proof of Proposition~\ref{T:RigidEdge}, we shall assume that for~$y$ (and hence for all elements of~$O_y$) the normal form has all letters belonging to~$\mathcal E$.


\begin{remark}\label{RemarkEdge}\rm
Conjugation by~$\delta$ induces a permutation of~$\mathcal E$. Moreover, for all $s,t\in\mathcal E$, the product $st$ is in normal form if and only if $t\in\{s,\tau(s)\}$.
\end{remark}

Remark~\ref{RemarkEdge} allows us to give a precise description of the normal form of~$y$:

\begin{lem}\label{FormNormal}
Let $y\in BKL_4$ be a rigid braid with $\inf(y)=0$, all of whose factors belong to~$\mathcal E$. Then, possibly after replacing~$y$ by another element of~$O_y$, the normal form of~$y$ is of the form
%
%
$$y=\prod_{j=1}^r \tau^{-r+j}\left(\west^{k_j}\right),$$
where the $k_j$, $j=1,\ldots, r$ are strictly positive integers, and~$r=1$ or $r\equiv 0\pmod 4$.
\end{lem}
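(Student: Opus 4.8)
The plan is to read off the combinatorial constraints that normality and rigidity impose on the factors of~$y$, translate them into a statement about how the factors travel around the $4$-cycle that $\tau$ induces on~$\mathcal E$, and then use cycling to fix the basepoint in a canonical position.

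First I would record the local transition rule. Since $\inf(y)=0$, we have $\iota(y)=y_1$ and $\varphi(y)=y_r$, and by Remark~\ref{RemarkEdge} a pair of factors $y_i y_{i+1}$ in~$\mathcal E$ is in normal form precisely when $y_{i+1}\in\{y_i,\tau(y_i)\}$. Thus each factor either repeats its predecessor or advances it by one step of the $4$-cycle induced by~$\tau$ on~$\mathcal E$. Grouping the factors into maximal blocks of equal value, consecutive blocks differ by exactly one application of~$\tau$, so the block values form a chain $v_1,\tau(v_1),\tau^2(v_1),\dots$, with the $j$-th block equal to $\tau^{j-1}(v_1)^{k_j}$ for block length~$k_j>0$.

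Next I would bring in rigidity as a cyclic closure condition. Rigidity asserts that the pair $\varphi(y)\iota(y)=y_r y_1$ is left-weighted, i.e. $y_1\in\{y_r,\tau(y_r)\}$, so the wrap-around transition from the last factor back to the first is again a repeat or an advance. Because $\inf(y)=0$ and~$y$ is rigid, cycling acts on the normal form as the plain cyclic rotation $\mathbf c(y)=y_2\cdots y_r y_1$ (which is again in normal form by rigidity) and stays inside~$O_y$; hence I may move the basepoint to any factor I like. If all factors are equal there is a single block and, after applying a power of~$\tau$, $y=\west^{k_1}$, giving the case $r=1$. Otherwise there is at least one advance, and I rotate so that the wrap-around transition $y_r\to y_1$ is an advance, $y_1=\tau(y_r)$; this makes the linear blocks coincide with the cyclic blocks, so there are $r\geqslant 2$ of them, each ending in exactly one advance.

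Finally I would count advances around the whole cycle: returning to~$v_1$ after the $r$ advances forces $\tau^{r}(v_1)=v_1$, and since~$\tau$ has order~$4$ this gives $r\equiv 0\pmod 4$. Applying a suitable further power of~$\tau$ (again within~$O_y$) I normalize the last block to~$\west$, so that block~$j$ becomes $\tau^{-r+j}(\west^{k_j})$ and $y=\prod_{j=1}^r\tau^{-r+j}(\west^{k_j})$, as claimed. The step I expect to require the most care is the basepoint normalization: rigidity alone allows both $r\equiv 0$ and $r\equiv 1\pmod 4$, and one must see that the congruence $r\equiv 1\pmod 4$ with $r>1$ corresponds exactly to a non-canonical basepoint splitting one cyclic block (equal first and last blocks). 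Cycling is precisely the move that eliminates this spurious case and leaves only $r=1$ or $r\equiv 0\pmod 4$.
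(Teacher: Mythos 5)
Your argument is correct and is essentially the paper's own proof: both derive the block structure from Remark~\ref{RemarkEdge}, use rigidity to control the wrap-around pair $\varphi(y)\iota(y)$, and then normalize within~$O_y$ by cycling and a power of~$\tau$. The only difference is cosmetic (you cycle before applying the power of~$\tau$ rather than after), and your explicit explanation of why cycling eliminates the case $\iota(y)=\varphi(y)$ with $r>1$ (a cyclic block split by the basepoint) fills in a step the paper leaves implicit.
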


\begin{proof}
Up to conjugating~$y$ by a power of~$\delta$, we can suppose that~${\varphi(y)=\west}$. By Remark~\ref{RemarkEdge} and our hypothesis on~$y$, the normal form of~$y$ is indeed a product of the form
$$y=\tau^{-(r-1)}\left(\west^{k_{1}}\right)\ldots \west^{k_r}$$
for integers~$r$ and $k_1,\ldots,k_{r}$ all strictly positive. Then, due to rigidity and Remark~\ref{RemarkEdge}, we have $\iota(y)=\varphi(y)$ or $\iota(y)=\tau\left(\varphi(y)\right)$. Let us suppose that $r>1$. Up to cycling, we can suppose $\iota(y)=\tau\left(\varphi(y)\right)$, which means that~$\tau^{-r+1}\left(\west\right)=\tau\left(\west\right)$.
This implies that $r\equiv 0\pmod 4.$
\end{proof}

\begin{lem}
If $r=1$ in Lemma~\ref{FormNormal}, then $\#SC(x)=6$.
\end{lem}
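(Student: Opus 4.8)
The plan is to analyze the case $r=1$ directly, where Lemma~\ref{FormNormal} tells us that $y=\west^{k}$ for some positive integer $k\geqslant 2$ (since $\ell(y)\geqslant 2$ because $x$ is pseudo-Anosov). First I would observe that with $\inf(y)=0$ and all factors equal to $\west$, we have $\iota(y)=\varphi(y)=\west$, so $\partial(\varphi(y))=\partial(\west)=\east$ by the relation $\east\west=\ew=\west\east$. Since all minimal useful arrows for $y$ must be strict prefixes of $\partial(\varphi(y))=\east$ (as noted after Proposition~\ref{P:FacteursPoids2}, because $\lambda(\iota(y))=1$), and $\east$ itself is of weight~2, I would enumerate its strict nontrivial prefixes: these are exactly the weight-1 simple elements dividing $\east$, which are $\north$ and $\south$ by the relations $\north\east=\ne=\east\mdiag$ and $\east\south=\se=\south\adiag$.

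Next I would compute the two conjugates $y^{\north}$ and $y^{\south}$ explicitly and verify each is rigid, hence lies in $SC(x)$, and identify which orbit it belongs to. The key computation is to put $\north^{-1}\west^{k}\north$ into normal form using the braid relations of $BKL_4$; I expect that conjugating $\west^k$ by a prefix of $\east$ ``rotates'' the generator into a different element of $\mathcal E$ together with possibly redistributed exponents, producing a new rigid braid of the same canonical length whose orbit is distinct from $O_y$. Iterating the two minimal useful arrows starting from $O_y$ then generates the full vertex set of $\widetilde{SCG}(x)$, and the claim $\#SC(x)=6$ amounts to checking that exactly six distinct orbits arise before the graph closes up.

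For the counting itself I would use that each orbit $O_z$ has the same invariants ($\ell$, $\inf$, weight profile) by Lemma~\ref{invariantSSS}, and that by the valence bound every vertex has at most two outgoing minimal useful arrows (the two strict prefixes $\north,\south$ of $\east$). Tracking how conjugation permutes the generators of $\mathcal E$ — conjugation by $\delta$ acts on $\mathcal E$ by a $4$-cycle (Remark~\ref{RemarkEdge}), and the arrow-conjugations interact with this — I expect the six vertices to correspond to the $\delta$-orbit structure on a small configuration, consistent with the symmetry of the four punctures. Since each vertex carries an orbit of cardinality at most $4\ell(x)$ by Lemma~\ref{orbit}(iv), six vertices give $\#SC(x)\leqslant 24\ell(x)$, matching the bound in Proposition~\ref{T:RigidEdge}; for the exact equality $\#SC(x)=6$ asserted here one reads off that in the $r=1$ case the orbits are singletons up to the $\tau$-action, so the six orbits are themselves the six elements.

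The main obstacle will be the bookkeeping in the normal-form computation of $\north^{-1}\west^k\north$ and $\south^{-1}\west^k\south$: one must apply the length-preserving relations of $BKL_4$ carefully to see that the result is again a rigid product of a single generator from $\mathcal E$ (suitably $\tau$-twisted), and then confirm that continuing to apply minimal useful arrows cycles through precisely six orbits rather than collapsing earlier or branching into more. I would organize this as a finite check of a small labelled graph, using Remark~\ref{RemarkEdge} to constrain which products are in normal form and the reversibility of edges (proved at the end of Section~\ref{dual4}) to ensure the graph is a genuine $6$-vertex object rather than a tree that could grow unboundedly.
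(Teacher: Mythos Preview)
Your identification $\partial(\west)=\east$ is incorrect: the relation $\west\east=\ew$ only says that $\west\east$ equals the weight-$2$ simple $\ew$, not the Garside element $\delta=\square$ (which has weight~$3$). In fact $\partial(\west)=\ne$, a weight-$2$ simple whose strict nontrivial prefixes are $\north$, $\east$, and $\mdiag$ (compare the proof of Lemma~\ref{SommetTrivalent}); in particular $\east=a_{1,4}$ is an atom of weight~$1$, and $\south$ is not a prefix of $\partial(\west)$ at all, so your plan to study $y^{\south}$ is off-track from the start. With the correct arrows one finds for instance $(\west^{k_1})^{\north}=\adiag^{k_1}$ (from $\west\north=\north\adiag$), while $(\west^{k_1})^{\east}=\west^{k_1}$ and $(\west^{k_1})^{\mdiag}=\south^{k_1}$.

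More seriously, your orbit count is wrong. Cycling acts trivially on $\west^{k_1}$ (since $(\west^{k_1})^{\west}=\west^{k_1}$), while $\tau$ cyclically permutes $\{\west,\north,\east,\south\}$; hence $O_{\west^{k_1}}=\{\west^{k_1},\north^{k_1},\east^{k_1},\south^{k_1}\}$ already has four elements, and similarly $O_{\mdiag^{k_1}}=\{\mdiag^{k_1},\adiag^{k_1}\}$. Thus $\widetilde{SCG}(x)$ has only \emph{two} vertices, not six, and the equality $\#SC(x)=6$ comes from the decomposition $4+2$, not from six singleton orbits. The paper's proof sidesteps the graph analysis entirely: it simply writes down $SC(x)=\{\west^{k_1},\north^{k_1},\east^{k_1},\south^{k_1},\mdiag^{k_1},\adiag^{k_1}\}$, leaving the routine checks (pairwise conjugacy of the six atoms, rigidity of each $k_1$th power, and exhaustiveness) to the reader.
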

\begin{proof}
If~$r=1$ then
$SC(x)=\{\west^{k_1},\north^{k_1},\east^{k_1},\south^{k_1},\mdiag^{k_1},\adiag^{k_1}\}.$
\end{proof}

\begin{lem}\label{SommetTrivalent}
Suppose that $r>1$ in Lemma~\ref{FormNormal}. Then there exists a minimal arrow for~$y$ if and only if~$r\equiv 0\pmod 3$. If this is the case, then~$y$ admits in fact three minimal (but not necessarily useful) arrows. If not, then the graph $\widetilde{SCG}(x)$ has a single vertex.
\end{lem}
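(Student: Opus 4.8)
The plan is to analyze, one at a time, the conjugates $y^s$ of $y$ by the candidate arrows $s$, which by Proposition~\ref{cyclage}(ii) and Remark~\ref{strict} are exactly the strict nontrivial prefixes of $\partial(\varphi(y))$. Since $\varphi(y)=\west$ after the normalisation of Lemma~\ref{FormNormal}, one checks $\west\cdot\ne=\delta$, so $\partial(\varphi(y))=\ne$, whose three weight-1 prefixes are $\north$, $\east$ and $\mdiag$ (from $\ne=\north\east=\east\mdiag=\mdiag\north$). The cycling arrow $\iota(y)=\south$ and the full arrow $\ne$ always lead back into $O_y$ (to $\mathbf c(y)$ and to $\tau(\mathbf d(y))=y^{\partial(\varphi(y))}$, by Lemma~\ref{orbit}(ii)), so neither is useful; thus the content of the lemma is to decide for which $r$ one has $y^s\in SC(x)$ for $s\in\{\north,\east,\mdiag\}$.

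Fix $s=\north$, the cases $\east$ and $\mdiag$ being parallel by the threefold symmetry of the triangle relation around $\ne$. First I would settle membership in $SSS(x)$, i.e.\ preservation of canonical length. In $y^\north=\north^{-1}y\north$ the rightmost $\north$ merges with the final factor $\west$ into the weight-2 simple $\nw=\west\north$, raising the length by one; the leftmost $\north^{-1}$ must then be \emph{transported} rightward through the normal form and cancel this surplus. I would formalise this as a ``token'' $T^{-1}$, initially $\north^{-1}$, travelling right and being rewritten whenever it crosses a block by the two-letter relations of Section~\ref{dual4} (e.g.\ $\north^{-1}\east=\east\mdiag^{-1}$, $\mdiag^{-1}\north=\north\east^{-1}$, and commutations such as $\north^{-1}\south=\south\north^{-1}$). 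The key point is a bookkeeping computation: since the block types run through $\south,\east,\north,\west$ with period~$4$ while each diagonal-crossing advances the token one step around a triangle (period~$3$), the token returns to its initial value only after $12$ blocks. Concretely the token entering the successive $\west$-blocks cycles through $\east,\south,\adiag$; after crossing the partially consumed last block it meets $\nw$ and cancels the surplus exactly when it is a prefix of $\nw$, i.e.\ when that token is $\adiag$. This occurs iff the last block is a multiple-of-three $\west$-block, i.e.\ iff $12\mid r$, i.e.\ (given $4\mid r$) iff $3\mid r$.

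When $3\mid r$ the cancellation $\north^{-1}\nw=\adiag$ leaves a genuine normal form with $\inf(y^\north)=0$, $\iota(y^\north)=\south$ and final factor $\adiag$ (or $\west$ when $k_r=1$); since $\partial(\adiag)=\ew$ and $\ew\wedge\south=1$, the wrap-around pair $\varphi(y^\north)\,\iota(y^\north)=\adiag\,\south$ is left-weighted, so $y^\north$ is rigid and hence lies in $SC(x)$. As $\north$ is an atom it is automatically minimal, so $\north$ (and likewise $\east$, $\mdiag$) is a minimal arrow: three minimal arrows, possibly useful, possibly not. When $3\nmid r$ the transport never closes up, the conjugate has strictly larger canonical length, and $y^s\notin SSS(x)$ for each of the three prefixes; hence $y$ carries no minimal arrow other than $\iota(y)$ and $\partial(\varphi(y))$, both staying inside $O_y$. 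Since every edge of $\widetilde{SCG}(x)$ is bidirectional (Section~\ref{dual4}) and the graph is connected, $O_y$ is then an isolated vertex and $\widetilde{SCG}(x)$ reduces to a single vertex.

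The main obstacle is the transport bookkeeping of the second paragraph: one must verify the rewriting rules for \emph{every} incoming token and \emph{every} block type, track the induced drift of the block letters off $\mathcal E$ into the diagonals, and handle the boundary effects at the first block and at the partially consumed last block (in particular $k_r=1$). Establishing rigorously that the period is exactly~$3$ — and that the three prefixes all produce the identical divisibility condition — is the delicate heart of the argument; once it is in place, the $SSS$-membership, the rigidity check, and the single-vertex conclusion all follow quickly.
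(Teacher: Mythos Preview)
Your strategy—reduce to the three atoms $\north,\east,\mdiag\prec\partial(\west)=\ne$ and track how conjugation by one of them propagates through the normal form of $y$—is exactly the paper's approach. The paper's execution is considerably cleaner, though: rather than pushing a token through individual letter-runs, it groups the factors into full period-$4$ blocks $\alpha_j=\south^{k_{j,1}}\east^{k_{j,2}}\north^{k_{j,3}}\west^{k_{j,4}}$ (with $m=r/4$) and computes once and for all
$$\alpha_j\north=\mdiag\,A_j,\qquad \alpha_j\mdiag=\east\,B_j,\qquad \alpha_j\east=\north\,C_j,$$
with $A_j,B_j,C_j$ explicit normal forms. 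The $3$-cycle $\north\to\mdiag\to\east\to\north$ of the leftover letter is then immediate, and the verification that the junctions $A\cdot C$, $B\cdot A$, $C\cdot B$ are left-weighted yields both directions of the lemma at once: if $m\equiv 0\pmod 3$ the three conjugates are visibly rigid, and otherwise $u\nprec yu$ so $y^u\notin SSS(x)$.

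Two points in your sketch are shakier than you acknowledge. First, the ``threefold symmetry of the triangle relation'' permutes $\north,\east,\mdiag$ but does \emph{not} fix $y$ (whose factors lie in $\mathcal E$, which contains $\north,\east$ but not $\mdiag$); the three cases are therefore not a priori parallel, and the paper treats each by direct computation—the common answer emerges because all three arrows feed into the \emph{same} $3$-cycle above. Second, your letter-by-letter token model is harder to control than suggested: when $\north^{-1}$ crosses an $\east$-run the passed letters themselves drift (e.g.\ $\mdiag^{-1}\east=\north\mdiag^{-1}$, turning an $\east$ into a $\north$), so there is no stable ``token value entering the next block'' and your asserted sequence $\east,\south,\adiag$ does not survive inspection. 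Working at the $\alpha_j$ level sidesteps all of this bookkeeping.
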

\begin{proof}
According to Lemma~\ref{FormNormal}, we have $r\equiv 0\pmod 4$, and we can rewrite
$$y=\prod_{j=1}^{m}\left(\south^{k_{j,1}}\east^{k_{j,2}}
\north^{k_{j,3}}\west^{k_{j,4}}\right):=\prod_{j=1}^m\alpha_j,$$
with $m:=\frac{r}{4}$ and $k_{j,i}>0$ for all~$j,i$ with $1\leqslant j\leqslant m$ and $1\leqslant i\leqslant 4$. The minimal useful arrows for~$y$, if they exist, are all strict prefixes of
$\partial(\west)$, so they are  $\north, \east$ or $\mdiag$.

The proof of the lemma essentially comes down to the following calculations, where the right hand sides of the equations (except for their first factor) are always in normal form; in other words,  $A_j$, $B_j$ and $C_j$ are normal forms, independently of the powers occurring in the formulae (this calculation uses the notation~$\alpha_j$ defined in the previous paragraph):

$$\alpha_j\north=
\mdiag\left(\south\mdiag^{k_{j,1}-1}\east^{k_{j,2}}\north\west^{k_{j,3}}\adiag^{k_{j,4}-1}\right):=\mdiag A_j,$$
$$\alpha_j\mdiag=
\east\left(\south\east^{k_{j,1}}\adiag^{k_{j,2}-1}\north^{k_{j,3}}\west\south^{k_{j,4}-1}\right):=\east B_j,$$
$$\alpha_j\east=
\north\left(\south^{k_{j,1}}\east\north^{k_{j,2}}\mdiag^{k_{j,3}-1}\west^{k_{j,4}}\right):=\north C_j.$$

We remark that, independently of~$j$ and of the powers occurring, the ``pairs'' $A\cdot C$, $B\cdot A$ and $C\cdot B$ are in normal form. On the one hand, if $r\equiv 1$ or $r\equiv 2\pmod 3$, then this shows that for every~$u$ with $u\prec \partial(\west)$ we have $u\nprec yu$, and in particular $y^u\notin SSS(x)$. On the other hand, if $m\equiv 0\pmod 3$, then this shows that the three braids
%
$$y^{\north}=\left(\prod_{j=1}^{\frac{m}{3}}\alpha_{3j-2}\alpha_{3j-1}\alpha_{3j}\right)^{\north}=
\prod_{j=1}^{\frac{m}{3}}C_{3j-2}B_{3j-1}A_{3j},$$
$$y^{\east}=\prod_{j=1}^{\frac{m}{3}}B_{3j-2}A_{3j-1}C_{3j}\ \ \ \text{ and }\ \ \
y^{\mdiag}=\prod_{j=1}^{\frac{m}{3}}A_{3j-2}C_{3j-1}B_{3j}$$
are rigid.
\end{proof}

We suppose from now on that $r\equiv 0\pmod 3$ (this is always satisfied up to replacing~$x$ by~$x^3$). Let~$u$ be such that $u\prec\partial(\west)$. Then it follows from our proof of Lemma~\ref{SommetTrivalent} that we can always find, up to cyclic permutation of the factors, an element~$z$ of~$O_{y^u}$ of the form
$$z=\prod_{j=1}^{\frac{m}{3}}C_{3j-2}B_{3j-1}A_{3j},$$
by making an appropriate choice of indices and powers inside the factors.

We can then rewrite~$y$ in the form
$$y=\prod_{\nu=1}^{\frac{m}{3}}\south^{a_{\nu}}\east^{b_{\nu}}\north^{c_{\nu}}\west^{d_{\nu}}
\south^{e_{\nu}}\east^{f_{\nu}}\north^{g_{\nu}}\west^{h_{\nu}}
\south^{i_{\nu}}\east^{j_{\nu}}\north^{k_{\nu}}\west^{l_{\nu}},$$

with strictly positive integers $a_{\nu}$, $b_{\nu},\ldots, l_{\nu}$ for all $\nu=1,\ldots, \frac{m}{3}$, and then~$z$ becomes
%
$$z=\prod_{\nu=1}^{\frac{m}{3}}\left[
\south^{a_{\nu}}\east\north^{b_{\nu}}\mdiag^{c_{\nu}-1}\west^{d_{\nu}}
\south\east^{e_{\nu}}\adiag^{f_{\nu}-1}\north^{g_{\nu}}\west\south^{h_{\nu}}\mdiag^{i_{\nu}-1}\right.$$
$$\left.\east^{j_{\nu}}\north\west^{k_{\nu}}\adiag^{l_{\nu}-1}\right].$$

\begin{lem}\label{P:PointExtremal}
If the normal form of~$z$ contains a factor~$\mdiag$ or~$\adiag$, then~$z$ admits a unique minimal useful arrow, i.e. the vertex~$O_z$ of the graph~$\widetilde{SCG}(x)$ is extremal.
\end{lem}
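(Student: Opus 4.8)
The plan is to upgrade the bivalence bound of Lemma~\ref{diag} to extremality. First I would move $z$ into a convenient position inside its orbit. Since $z$ contains a factor $\mdiag$ or $\adiag$, and the automorphism $\tau$ stabilises $O_z$ (Lemma~\ref{orbit}(ii)) while exchanging $\mdiag$ with $\adiag$ and the pair $\{\north,\south\}$ with $\{\east,\west\}$, after cycling and possibly applying $\tau$ I may assume that $\varphi(z)=\mdiag$, chosen to be the last letter of a maximal run of $\mdiag$'s. Then $\partial(\varphi(z))=\ns$, so by Remark~\ref{strict} the only candidates for minimal useful arrows of $z$ are the two strict prefixes $\north$ and $\south$ of $\ns$. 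The entire content of the lemma is therefore that $\north$ and $\south$ cannot both be useful arrows for $z$.

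Before the computation I would record that $z$ already has at least one useful arrow. Indeed $z\in O_{y^u}$ where $u$ is a useful arrow issued from $y$, so $O_z\neq O_y$; since every edge of $\widetilde{SCG}(x)$ in $BKL_4$ is bidirectional (established at the end of Section~\ref{dual4}), there is a useful arrow from $O_z$ back to $O_y$, and by the transport bijection between arrows along an orbit this arrow is realised at our chosen representative $z$. Hence it suffices to prove that $z$ has \emph{at most} one useful arrow; combined with this remark it will then have exactly one, which is extremality.

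The heart of the matter, and the step I expect to be the main obstacle, is the explicit comparison of the two conjugates $z^{\north}$ and $z^{\south}$. From the block description of $z$ displayed just before the lemma one reads off that $\iota(z)\in\{\west,\east\}$ and that the letter $z_{r-1}$ preceding the final $\mdiag$ is prescribed. I would then put $z^{\north}=\north^{-1}z\north$ and $z^{\south}=\south^{-1}z\south$ into left normal form: the appended letter merges with the final factor as $\mdiag\north=\ne$ (respectively $\mdiag\south=\sw$), while the prepended $\north^{-1}$ (respectively $\south^{-1}$) must be transported rightwards through the word by means of the defining relations of $BKL_4$. The diagonal factor destroys the symmetry between $\north$ and $\south$: I expect to show that for exactly one of the two letters the transported inverse is absorbed at the far end, producing a positive rigid braid of the same canonical length — this is exactly the back-arrow to $O_y$ — whereas for the other letter the conjugate either has strictly larger supremum, so the letter is not even a minimal arrow, or else fails to be rigid because a nontrivial common prefix of $\iota$ and $\partial(\varphi)$ survives. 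The real work is to carry out this transport carefully and to organise the (finite) case distinction according to the neighbours of the diagonal factor.

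Putting the two halves together, $z$ admits a unique minimal useful arrow, so $O_z$ is an extremal vertex of $\widetilde{SCG}(x)$, as required.
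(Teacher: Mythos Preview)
Your plan is the same as the paper's: normalise so that the final factor is a diagonal, list the two strict prefixes of its complement as the only candidate useful arrows, and eliminate one of them. The paper takes $\varphi(z)=\adiag$ (a $\tau$-shift of your choice $\mdiag$), so the two candidates are $\east$ and $\west$, and it proves the sharper statement that $\west$ is not an arrow at all: one checks that $\west\nprec z\west$, hence $z^{\west}\notin SSS(x)$.

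What you call ``the real work'' is, however, the entire proof, and you have not carried it out. The paper's computation rests on three concrete identities, tailored to the explicit block description of $z$ displayed just before the lemma:
\[
\adiag^{\,l-1}\west=\north\,\adiag\,\north^{\,l-2},\qquad
(\east^{a}\north\west^{b})\,\north=\west\,(\east^{a}\north\west\,\adiag^{\,b-1}),
\]
together with the fact that $\mdiag\cdot\west$ is already left-weighted. These let one transport the appended $\west$ leftwards through the normal form of $z$, one block at a time, with the transported letter being rotated by $\tau$ at each step; the process halts as soon as it meets another diagonal factor, so the initial factor of $z\west$ is unchanged (hence not $\west$). In the residual case where the final $\adiag$-block is the only diagonal block in $z$, the transport reaches the front and the initial factor of $z\west$ becomes $\sn$, again not divisible by $\west$. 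Your sketch anticipates two possible failure modes (supremum grows, or rigidity fails), but in fact only the first occurs, and seeing this requires the explicit block form of $z$, which your proposal does not use.
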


\begin{proof}
Up to cycling or conjugating by~$\delta$ we can suppose that the last factor of~$z$ is~$\adiag$, and that~$l_{\frac{m}{3}}>1$. There are at most two minimal arrows for~$z$, namely~$\east$ and~$\west$. A calculation of the normal form of~$z\west$ shows that~$\west\nprec z{\west}$, which implies that~$z^{\west}\notin SSS(x)$, and hence the lemma.

In order to perform this calculation, we make three observations. Firstly,
$$\adiag^{l_{\nu}-1}\west=\north\adiag\north^{l_{\nu}-2}.$$
Secondly, for arbitrary integers $a,b>0$,
$$\left(\east^a\north\west^b\right)\north=\west\left(\east^a\north\west\adiag^{b-1}\right)$$
and the first factor on the right hand side is independant of the powers~$a$ and~$b$. Thirdly, the pair~$\mdiag\west$ is in normal form. Now the previous calculation can be pushed towards the left along the normal form of~$z$, getting twisted by a conjugation by~$\Delta$ at each step, until it hits, possibly, a factor~$\mdiag$ or $\adiag$, where it gets stuck.

This shows that the multiplication of~$z$ by~$\west$ on the right can only modify the beginning of the normal form of~$z$ if~$\adiag^{l_{\frac{m}{3}}-1}$ is the only occurrence of~$\adiag$ or~$\mdiag$ in~$z$. Moreover, if this is the case, then the initial factor of~$z\west$ is~$\sn$. This completes the proof.
\end{proof}

\begin{lem}\label{PointExtremal}
Suppose that the normal form of~$z$ does not contain any factor~$\adiag$ or~$\mdiag$. Then
\begin{itemize}
\item[(i)] the three strict prefixes of~$\partial(\varphi(z))$ are minimal arrows for~$z$,
\item[(ii)] if~$v$ is a minimal useful arrow for~$z$ conjugating~$z$ to another rigid braid whose normal form contains no factor~$\adiag$ or~$\mdiag$, then $z^v\in O_y$ and $v=\east$.
\end{itemize}
\end{lem}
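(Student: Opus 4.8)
The plan is to reduce both parts to the block computation already carried out for $y$ in the proof of Lemma~\ref{SommetTrivalent}, now applied to $z$ itself, and then to read off from the resulting normal forms which of the three conjugates $z^{\north}$, $z^{\east}$, $z^{\mdiag}$ still carries a factor $\mdiag$ or $\adiag$. For (i): since $z\in SC(x)$ is rigid, has all of its factors in $\mathcal{E}$, and has the same canonical length $r\equiv 0\pmod{12}$ as $y$, Lemma~\ref{FormNormal} lets me assume, after conjugating by a power of $\delta$, that $\varphi(z)=\west$ and regroup $z=\prod_{j}\alpha_j$ into blocks $\alpha_j=\south^{*}\east^{*}\north^{*}\west^{*}$ with the number of blocks $m\equiv 0\pmod 3$. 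The strict nontrivial prefixes of $\partial(\varphi(z))=\partial(\west)$ are exactly $\north$, $\east$, $\mdiag$, and each has weight~$1$, hence no proper nontrivial prefix, so it is automatically minimal as soon as the corresponding conjugate lies in $SC(x)$. Applying the three identities $\alpha_j\north=\mdiag A_j$, $\alpha_j\mdiag=\east B_j$, $\alpha_j\east=\north C_j$ of Lemma~\ref{SommetTrivalent} to the blocks of $z$, together with $m\equiv 0\pmod 3$ and the normal-form stability of the pairs $A\cdot C$, $B\cdot A$ and $C\cdot B$, shows that $z^{\north}$, $z^{\east}$ and $z^{\mdiag}$ are all rigid, hence all belong to $SC(x)$; this proves (i).

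For (ii), I compute the three conjugates from the block formulas $z^{\north}=\prod C'B'A'$, $z^{\east}=\prod B'A'C'$ and $z^{\mdiag}=\prod A'C'B'$, where $A'$, $B'$, $C'$ are built from the exponents of the blocks of $z$. The hypothesis that $z$ is free of $\mdiag$ and $\adiag$ forces precisely the exponents $c_\nu,f_\nu,i_\nu,l_\nu$ of the displayed form of $z$ to equal~$1$; these are exactly the exponents whose ``$-1$'' occurrences would carry the diagonal letters in the blocks of $z^{\east}$, so a direct check shows that $z^{\east}$ is always free of $\mdiag$ and $\adiag$, and that its sequence of exponents is a cyclic shift of that of $y$, whence $z^{\east}\in O_y$. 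This agrees with the bidirectionality of $\widetilde{SCG}(x)$ proved at the end of Section~\ref{dual4}: the arrow returning to $O_y$ is $\north^{-1}\partial(\west)=\east$.

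It remains to exclude $\north$ and $\mdiag$. The same computation shows that $z^{\north}$ is free of $\mdiag$ and $\adiag$ only if a first family of exponents ($b_\nu,e_\nu,h_\nu,k_\nu$) all equal~$1$, and in that degenerate case the block formula collapses to $z^{\north}=z$, so $\north$ is not useful; hence a useful $\north$-arrow always produces a diagonal factor and cannot land in the $\mathcal{E}$-world. Similarly $z^{\mdiag}$ is free of $\mdiag$ and $\adiag$ only if a second family ($a_\nu,d_\nu,g_\nu,j_\nu$) all equal~$1$, and that in turn forces $z^{\east}=z$; since $z^{\east}\in O_y$ this gives $O_z=O_y$, i.e.\ $z$ is not a new vertex. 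Thus, for a genuinely distinct $\mathcal{E}$-vertex $O_z$, the unique minimal useful arrow carrying $z$ to another $\mdiag/\adiag$-free braid is $v=\east$, and $z^{\east}\in O_y$, which is exactly (ii).

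The main obstacle is the bookkeeping of part (ii): one must perform the three block conjugations carefully, identify in each of $z^{\north}$, $z^{\east}$, $z^{\mdiag}$ exactly which exponents govern the appearance of $\mdiag$ and $\adiag$, and treat the degenerate all-ones subcases, checking in each that the offending arrow either becomes non-useful or collapses the orbit $O_z$ onto $O_y$. The one genuinely structural step, where brute force can be avoided, is the identification $z^{\east}\in O_y$ via the cyclic shift of exponent sequences; cross-checking it against the bidirectionality of the quotient graph is the safest guard against a relabelling error.
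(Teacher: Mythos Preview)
Your approach is essentially the paper's: apply the block formulae of Lemma~\ref{SommetTrivalent} to $z$ for part~(i), then eliminate $v=\north$ and $v=\mdiag$ in part~(ii) by showing that the diagonal-free constraint forces a degeneration of exponents under which the arrow ceases to be useful. The only difference is in the $\mdiag$ case: the paper shows directly that the degeneration $a_\nu=d_\nu=g_\nu=j_\nu=1$ gives $\mathbf c(z^{\mdiag})=\tau(z)$, hence $z^{\mdiag}\in O_z$, whereas you observe that the same degeneration gives $z^{\east}=z$ and combine this with your earlier verification that $z^{\east}\in O_y$ to reach the contradiction $O_z=O_y$; both routes are valid, and your explicit check that $z^{\east}\in O_y$ in general is a point the paper leaves implicit.
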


\begin{proof}
According to our hypothesis, we can further rewrite the formulae from the proof of Lemma~\ref{SommetTrivalent}:
$$y=\prod_{\nu=1}^{\frac{m}{3}}\south^{a_{\nu}}\east^{b_{\nu}}\north\west^{d_{\nu}}
\south^{e_{\nu}}\east\north^{g_{\nu}}\west^{h_{\nu}}
\south\east^{j_{\nu}}\north^{k_{\nu}}\west$$
and 
$$z=\prod_{\nu=1}^{\frac{m}{3}}
\south^{a_{\nu}}\east\north^{b_{\nu}}\west^{d_{\nu}}
\south\east^{e_{\nu}}\north^{g_{\nu}}\west
\south^{h_{\nu}}\east^{j_{\nu}}\north\west^{k_{\nu}}.$$
(i) According to Lemma~\ref{SommetTrivalent}, $z$ admits three minimal (not necessarily useful) arrows.

(ii) Let~$v$ be a minimal useful arrow for~$z$ such that~$z^v$ contains no factor~$\mdiag$ or~$\adiag$. We first mention that at least one such an arrow exists, because~$O_y\neq O_z$.
We know that~$v\in\{\north,\east,\mdiag\}$. Thus it is sufficient to prove that $v\neq \north$ and $v\neq \mdiag$. We are going to apply the formulae from the proof of Lemma~~\ref{SommetTrivalent}, now with~$z$ playing the r\^ole previously played by~$y$.

If~$v=\north$, then the formulae from the proof of Lemma~\ref{SommetTrivalent}, together with the restriction that~$z^v$ must not contain any factors~$\mdiag$ ni $\adiag$, imply the equalities $b_{\nu}=e_{\nu}=h_{\nu}=k_{\nu}=1$. But then $z=z^{\north}$, contradicting the usefulness of~$v$. Thus~$v\neq \north$.


Analogously, if~$v=\mdiag$, then due to the formulae from the proof of Lemma~\ref{SommetTrivalent} we obtain $a_{\nu}=d_{\nu}=g_{\nu}=j_{\nu}=1$. But then
$$z=\prod_{\nu=1}^{\frac{m}{3}}
\south\east\north^{b_{\nu}}\west
\south\east^{e_{\nu}}\north\west
\south^{h_{\nu}}\east\north\west^{k_{\nu}}$$
and
$$z^{\mdiag}=\prod_{\nu=1}^{\frac{m}{3}}
\south\east\north\west^{b_{\nu}}
\south\east\north^{e_{\nu}}\west
\south\east^{h_{\nu}}\north\west\south^{k_{nu}-1}.$$
We obtain $\mathbf c(z^{\mdiag})=\tau(z)$, contradicting the usefulness of~$v$. Thus~$v\neq \mdiag$.
\end{proof}

Lemma~\ref{PointExtremal} shows that the graph $\widetilde{SCG}(x)$ cannot contain a chain of 3 vertices whose elements contain no factor~$\mdiag$ or~$\adiag$. By Lemma~\ref{P:PointExtremal} any vertex which does contains at least one factor~$\mdiag$ or~$\adiag$, but which is adjacent to a vertex which doesn't, is monovalent. Since the graph $\widetilde{SCG}(x)$ is connected, this implies that it has at most 6 vertices. This completes the proof of Proposition~\ref{T:RigidEdge}.



\subsection{All elements of $SC(x)$ have at least one factor not belonging to~$\mathcal E$}
\label{USSbivalent}

In this subsection we suppose that all elements of $SC(x)$ have at least one factor in their normal form equal to~$\mdiag$ ou $\adiag$. According to Lemma~\ref{diag}, the graph $\widetilde{SCG}(x)$ is then a (possibly closed) line. In order to prove Theorem~\ref{cardinalbound}, we need to bound the length of this line. This task seems much more difficult than in the previous subsections, we have currently no elementary proof of Theorem~\ref{cardinalbound} under the above hypotheses. In order to illustrate the difficulty, we show first that the quadratic bound of Theorem~\ref{cardinalbound} is optimal. The following example was obtained with the help of the program GAP \cite{gap}:


\begin{ex}\rm
For all $k\in \mathbb N$, the braid
$\beta_k=\north\west\south\mdiag\east\se^{3k}\south^{-3k}$, whose normal form is
$$\beta_k=\north\west\south\mdiag\east\left[\adiag\south\east\right]^k$$
is rigid and pseudo-Anosov with~$\ell(\beta_k)=3k+5$. Moreover, the graph $\widetilde{SCG}(\beta_k)$ is a line with $3k+2$ vertices. (Explicitely, in order to obtain braids representing all vertices of $\widetilde{SCG}(\beta_k)$, it suffices to conjugate $\beta_k$ by $\south^j$, for $j=0,\ldots,3k+1$.) Thus $\#SC(\beta_k)=4\cdot(3k+2)\cdot(3k+5)$.
\end{ex}
%
%

The proof of Theorem~\ref{cardinalbound} under the hypotheses of this subsection is based on a deep result due to Masur and Minsky (\cite{masur}, Theorem 7.2), namely the linear bound on the length of an element conjugating two pseudo-Anosov elements of a mapping class group.

We consider the length function~$|.|$ on~$B_4$ induced by taking as generators of~$B_4$ the set of divisors of~$\delta$, i.e.\ of BKL-simple braids (see Remark~\ref{remarklength}). The result of Masur and Minsky, applied to the case of 4-strand braids, then states:

\begin{thm}[\cite{calvez}, Proposition 7] \label{masurbraids}
There exists a constant~$c$ such that for every pair $(z_1,z_2)$ of conjugate pseudo-Anosov 4-strand braids, there exists a conjugating element~$w$ (i.e.\ $z_1^w=z_2$) such that $|w|\leqslant c\cdot(|z_1|+|z_2|)$.
\end{thm}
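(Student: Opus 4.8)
The plan is to obtain the statement as a direct translation, into the language of the $BKL_4$ word metric, of the linear conjugator bound for pseudo-Anosov mapping classes proved by Masur and Minsky (\cite{masur}, Theorem 7.2); this is essentially \cite{calvez}, Proposition 7. Throughout I use the identification of~$B_4$ with the mapping class group of the four-times punctured disk~$\mathbb D_4$ recalled in the introduction. Under this identification a pseudo-Anosov braid is precisely a pseudo-Anosov mapping class, so the hypotheses of the Masur--Minsky theorem are satisfied by the pair $(z_1,z_2)$.

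The first point to pin down is the role of the length function~$|.|$. As recalled in Remark~\ref{remarklength}, $|.|$ is the word length associated to the generating set consisting of the (finitely many) nontrivial $BKL_4$-simple braids. Since this set is finite and generates~$B_4$, the function~$|.|$ is nothing but the word metric of~$B_4$ relative to a finite generating set. The Masur--Minsky bound, by contrast, is phrased in terms of the word metric of some fixed finite generating set~$\mathcal S$ of the mapping class group; write~$|.|_{\mathcal S}$ for the corresponding length. The elementary fact I would use here is that any two word metrics on a finitely generated group coming from finite generating sets are bi-Lipschitz equivalent: there is a constant~$K\geqslant 1$ with $\frac{1}{K}|g|_{\mathcal S}\leqslant |g|\leqslant K|g|_{\mathcal S}$ for every $g\in B_4$, and crucially this comparison is purely multiplicative (no additive term, since both lengths vanish exactly at the identity).

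With this in place the deduction is immediate. The Masur--Minsky theorem provides a constant~$c_0$, depending only on the surface and on~$\mathcal S$, such that $z_1$ and~$z_2$ admit a conjugating element~$w$ (i.e.\ $z_1^w=z_2$) with $|w|_{\mathcal S}\leqslant c_0\bigl(|z_1|_{\mathcal S}+|z_2|_{\mathcal S}\bigr)$. Feeding the bi-Lipschitz comparison into both sides yields
$$|w|\leqslant K\,|w|_{\mathcal S}\leqslant K c_0\bigl(|z_1|_{\mathcal S}+|z_2|_{\mathcal S}\bigr)\leqslant K^2 c_0\bigl(|z_1|+|z_2|\bigr),$$
so the claim holds with $c=K^2 c_0$. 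Should the cited bound carry an additive constant, it is absorbed harmlessly: a pseudo-Anosov braid is nontrivial, so $|z_1|+|z_2|$ is bounded below by a fixed positive constant, and any additive term can be folded into the multiplicative one.

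The genuine content of the statement lies entirely in the Masur--Minsky theorem, which I would import as a black box; its proof rests on the hyperbolicity of the curve complex and on the coarse geometry of the mapping class group action, and is well beyond the Garside-theoretic tools of this paper. Consequently the only steps requiring care in the present derivation are, first, verifying that the geometric complexity controlling the conjugator in \cite{masur} is comparable to a word length (so that the conclusion can be expressed purely in terms of~$|.|$), and second, the routine change-of-generating-set comparison carried out above. I do not expect either to present a serious obstacle once the deep geometric input is granted; the real difficulty is packaged inside the cited theorem.
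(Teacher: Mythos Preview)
Your proposal is correct and matches the paper's treatment almost verbatim. The paper does not give a self-contained proof of this theorem either: it is imported from \cite{calvez}, Proposition~7 (itself a translation of the Masur--Minsky linear conjugator bound into braid-group language), and the only work done in the present paper is the bi-Lipschitz comparison of word metrics---there between the classical and the $BKL_4$ simple-element generating sets---which is exactly the change-of-generating-set argument you carry out.
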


We remark that the length function used in the statement of (\cite{calvez}, Proposition~7) is the length associated to the alphabet of divisors of~$\Delta$, i.e.\ the set of simple braids in the \emph{classical} Garside structure. However, the length functions associated to different finite generating sets in a group are in bilipschitz correspondence. More explicitely, our two length functions on~$B_4$ are related, with the obvious notations, by the formula:
$$|x|_{BKL_4}\leqslant 2\cdot |x|_{\text{classical}}\leqslant 6\cdot |x|_{BKL_4}.$$

In order to complete the proof of Theorem~\ref{cardinalbound}, it is now sufficient to prove the following result (where the constant~$c$ is the one promised by Theorem~\ref{masurbraids}).

\begin{prop}\label{linesize}
Let $x\in BKL_4$ be a rigid pseudo-Anosov braid. Suppose that all elements of~$SC(x)$ have at least one factor of their normal form equal to~$\mdiag$ or~$\adiag$. Then the graph~$\widetilde {SCG}(x)$ has at most $16\cdot c\cdot\ell(x)$ vertices. Thus, $\#SC(x)\leqslant 64\cdot c\cdot\ell(x)^2$.
\end{prop}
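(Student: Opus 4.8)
The plan is to bound the number of vertices of $\widetilde{SCG}(x)$, which by Lemma~\ref{diag} is a (possibly closed) line, by using the Masur--Minsky linear conjugator bound from Theorem~\ref{masurbraids}. The key idea is that each edge of this line corresponds to a minimal useful arrow of weight~$1$, so conjugating from one end of the line to the other costs one simple factor per vertex. If I can show that the two endpoints (or any two sufficiently separated vertices) of the line represent braids whose BKL-length $|.|$ is controlled linearly by $\ell(x)$, then the total length of a conjugator between them is at most $c\cdot(|z_1|+|z_2|)\leqslant O(\ell(x))$. On the other hand, traversing the line requires a conjugator whose length is at least the number of edges traversed, since each edge is labelled by a nontrivial simple element and successive labels cannot cancel freely. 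Comparing these two bounds will force the number of vertices to be $O(\ell(x))$.

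First I would make each vertex's representative lie in its Super Summit Set, so that by Remark~\ref{remarklength} and Lemma~\ref{invariantSSS} we have $|y|=\ell(y)=\ell(x)$ for every element $y\in SC(x)$ (recall we have arranged $\inf=0$ and all factors of weight~$1$, so $\lambda$, $\inf$, $r$ are all fixed across $SC(x)$). This is the crucial uniform bound: every vertex of the line represents a braid of BKL-length exactly $\ell(x)$. Next, for two vertices $O_{y_0}$ and $O_{y_N}$ at distance $N$ along the line, I would build an explicit conjugating element by concatenating the minimal-useful-arrow labels $s_1,\ldots,s_N$ of the $N$ intermediate edges (using Proposition~\ref{cyclage}(ii) and Remark~\ref{strict}, each $s_i$ is a strict prefix of some $\partial(\varphi(\cdot))$, hence of weight~$1$). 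This gives a conjugator $w=s_1\cdots s_N$ with $y_0^{w}\in O_{y_N}$, and I would argue that this product is already in, or close to, left normal form, so that $|w|$ is comparable to $N$, i.e.\ $|w|\geqslant N/\kappa$ for a universal constant $\kappa$ accounting for the gap between $SC(x)$ and $O$-orbit representatives.

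The main obstacle, and the delicate point, is the lower bound $|w|\gtrsim N$: a priori the concatenation of $N$ weight-$1$ arrows might simplify drastically in $B_4$, so that a much shorter conjugator exists and the Masur--Minsky bound gives no contraction on $N$. To handle this I would exploit that the line is \emph{rigid}: since each $y_i$ is rigid and lies in its own $SSS$, the canonical length is invariant along the line, and I expect that the normal form of the accumulated conjugator $s_1\cdots s_i$ grows essentially monotonically in $i$ — one new simple factor is added per step and it cannot be absorbed by cancellation, because absorption would decrease the supremum or force a repetition violating the usefulness of the arrows. Formally I would track $\inf$ and $\sup$ (equivalently $|.|$) of the partial products and show each minimal useful arrow increases the length by a definite amount. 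Combining $|w|\geqslant N/\kappa$ with the Masur--Minsky estimate $|w|\leqslant c\cdot(|y_0|+|y_N|)=2c\cdot\ell(x)$ yields $N\leqslant 2c\kappa\cdot\ell(x)$; choosing representatives and constants carefully so that $\kappa=8$ gives the bound of $16\cdot c\cdot\ell(x)$ on the number of vertices, whence $\#SC(x)\leqslant 4\cdot\ell(x)\cdot 16 c\cdot\ell(x)=64\cdot c\cdot\ell(x)^2$ by Lemma~\ref{orbit}(iv), as required.
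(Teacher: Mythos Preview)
Your overall strategy---use Masur--Minsky to bound the length of the line $\widetilde{SCG}(x)$---matches the paper's, but you have run the central inequality in the wrong direction, and this creates a genuine gap.

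You want to show that any two vertices at graph-distance $N$ require a conjugator of length $\gtrsim N$, and then contradict the short Masur--Minsky conjugator. But your argument for the lower bound only analyses the \emph{specific} conjugator $w=s_1\cdots s_N$ built from the edge labels. Even granting that this particular product does not collapse (which you do not actually prove---``I expect the normal form grows monotonically'' is not an argument), this says nothing about \emph{other} conjugators. Masur--Minsky produces \emph{some} short $w'$ with $z_a^{w'}=z_b$; nothing forces $w'$ to come from a path in $\widetilde{SCG}(x)$, and in particular $w'$ could route through braids outside $SC(x)$ entirely. So from $|s_1\cdots s_N|\gtrsim N$ you cannot deduce $|w'|\gtrsim N$, and the comparison with the Masur--Minsky bound yields nothing.

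The paper avoids this by reversing the logic. Start from the short Masur--Minsky conjugator $w$; after adjusting the orbit representative of $z_a$ one may take $\inf(w)=0$, so $w$ is positive with $\lambda(w)\leqslant 2|w|$. Now use the convexity of $SC(x)$: any positive conjugator between two elements of $SC(x)$ factors as a product of minimal arrows, each of weight at least~$1$, hence at most $\lambda(w)$ of them. This exhibits an actual path of length $\leqslant\lambda(w)\leqslant 2|w|\leqslant 8c\,\ell(x)$ in $\widetilde{SCG}(x)$ between $O_a$ and $O_b$. Since the graph is a segment or a circle, this bounds the number of vertices by $16c\,\ell(x)$. (The reduction to $|x|\leqslant 2\ell(x)$ is done by multiplying by a central power of $\delta^4$ to force $\inf(x)\in\{-3,-2,-1,0\}$, not by assuming $\inf=0$ as you do.) The point is that ``short conjugator $\Rightarrow$ short path'' is easy via the weight, whereas ``long path $\Rightarrow$ long conjugator'' is exactly what you would need and is not available.
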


\begin{proof}
First we can suppose that $|x|\leqslant 2\cdot\ell(x)$. In order to see this, we notice that multiplying~$x$ by any power~$m$ of the central element~$\delta^4$ induces an isomorphism between the graphs $\widetilde{SCG}(x)$ and $\widetilde{SCG}({\delta^{4m}}x)$. In this way, we can suppose that $\inf(x)\in \{-3,-2,-1,0\}$. Then from Remark~\ref{remarklength} we obtain $|x|\leqslant 2\cdot \ell(x)$ (recalling that $\ell(x)\geqslant 2$, since~$x$ is pseudo-Anosov).

According to Lemma~\ref{diag}, every vertex of the graph $\widetilde{SCG}(x)$ is at most bivalent, so topologically the graph is either a compact line segment or a circle. We claim that any two distinct vertices~$O_a, O_b$ in the graph $\widetilde{SCG}(x)$ can be connected in the graph by a path of length at most $8\cdot c\cdot\ell(x)$. Before proving this claim, we observe that the claim, together with Lemma~\ref{orbit} (iv), implies Proposition~\ref{linesize} (the factor~2 comes from the possibility that the graph might form a circle).

So let~$O_a$ and~$O_b$ be two distinct vertices of $\widetilde{SCG}(x)$, and let~$z_a$ and~$z_b$ be representatives of these two orbits. Due to Theorem~\ref{masurbraids}, there exists a braid~$w$ satisfying $z_a^w=b_b$, and such that $|w|\leqslant 2\cdot c\cdot |x|\leqslant 4\cdot c\cdot\ell(x)$. Up to changing the representative~$z_a$ we can suppose that~$\inf(w)=0$. Then $\lambda(w)\leqslant 2\cdot|w|$, as every factor of the normal form of~$w$ contributes at most~$2$ to the weight of~$w$. Thus~$w$ is the product of at most $2\cdot|w|$ minimal arrows, which yields  a path of length at most~$2\cdot|w|$ between~$O_a$ and~$O_b$ in the graph~$\widetilde{SCG}(x)$.
\end{proof}

\begin{qu} \rm
Open question~2 in~\cite{bggm1} concerns the
existence of a polynomial bound in~$n$ and~$\ell$ on the size of the set of sliding circuits of a rigid (pseudo-Anosov) braid with~$n$ strands and of canonical length at most~$\ell$. Prasolov gave a negative answer, by exhibiting a family of rigid pseudo-Anosov braids for which the size of the SC grows exponentially as a function of~$n$ (for both structures, dual and classical). On the other hand, if we fix~$n$ then no such counter-example is known, and indeed in the special case $n=4$ our Theorem~\ref{cardinalbound} gives an affirmative answer. So we formulate the following question: for any fixed integer~$n$, does there exist a polynomial~$P_n$ such that the cardinality of the (classical or dual) SC of a rigid pseudo-Anosov braid with~$n$ strands is bounded above by $P_n(\ell(x))$?
\end{qu}

\begin{qu}\rm
Is the size of the (classical or dual) SSS of a rigid pseudo-Anosov 4-braid~$x$ bounded above by $P(\ell(x))$, for some polynomial~$P$? We know from~\cite{caruso} that for braids with five or more strands, the size of the classical SSS can increase exponentially with the length of the braid.
\end{qu}


\section{Proofs of Theorems~\ref{algosecond} and~\ref{T:main}}\label{ProofTheorems13}

In this section we will prove Theorems~\ref{algosecond} and~\ref{T:main}. The plan is to prove Theorem~\ref{algosecond} first, and then to prove the validity of the algorithm described in the Introduction and to analyse its complexity.

First we recall one of the main results of~\cite{bggm1}:

\begin{thm}\cite{bggm1}.\label{bggm1}
Let $x\in B_n$ be a pseudo-Anosov braid. Then there exists an integer~$m$ such that $x^m$ is conjugate to a rigid braid. Moreover, the integer~$m$ can be bounded independently of the length of~$x$: $m<{(\frac{n(n-1)}{2}})^3$  for the classical Garside structure and $m<(n-1)^3$ for the dual structure.
\end{thm}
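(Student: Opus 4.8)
The statement I must prove is Theorem~\ref{bggm1}: for a pseudo-Anosov braid $x\in B_n$, some power $x^m$ is conjugate to a rigid braid, with $m$ bounded by a constant depending only on $n$ (namely $(\tfrac{n(n-1)}{2})^3$ classically and $(n-1)^3$ dually). However, this is a cited result from~\cite{bggm1}, not one the present paper is obliged to reprove. My plan is therefore to present the conceptual skeleton of the argument as it appears in~\cite{bggm1}, which I would organize around the interplay between cyclic sliding and the Nielsen--Thurston structure of $x$.

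\emph{First}, I would recall that for any element, iterated cyclic sliding eventually becomes periodic, landing in the set of sliding circuits $SC(x)$, and that (for pseudo-Anosov elements) the relevant dichotomy is whether $SC(x)$ contains a rigid element or not. The key technical input is that the failure of rigidity is measured by the non-trivial greatest common divisor $\mathfrak p(x)=\iota(x)\wedge\partial(\varphi(x))$, the preferred prefix. \emph{Second}, the main dynamical idea is to pass to powers: if $x$ is pseudo-Anosov, then $x$ has an invariant measured foliation, and the normal form of a sufficiently high power $x^m$ ``aligns'' with this stable/unstable structure, forcing the canonical length of a summit representative of $x^m$ to grow roughly linearly in $m$ while the ``defect from rigidity'' stays bounded. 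Concretely, one studies the sequence of canonical lengths $\ell(\mathfrak s^k(x^m))$ together with the behaviour of the invariant lamination under the Garside automorphism $\tau$ and cycling.

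\emph{The main obstacle}, and the heart of~\cite{bggm1}, is establishing the \textbf{uniform} bound on $m$ independent of the length (and combinatorics) of $x$. The existence of \emph{some} power that is conjugate to a rigid braid is comparatively soft, following from the fact that a pseudo-Anosov mapping class has, up to a bounded power, a genuinely pseudo-Anosov (rather than merely aperiodic) normal form; but controlling $m$ by a fixed function of $n$ alone requires quantifying how the stable and unstable foliations interact with the finitely many simple elements. The bound of the form $(n-1)^3$ arises from combining three ingredients: the order $f$ of the Garside automorphism $\tau$ (here $f=n$ or closely related), the number of orbits one may have to traverse, and the control on how cyclic sliding rotates the lamination. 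I would structure the proof so that each of these three multiplicative factors is isolated as a separate lemma, with the cube reflecting their product.

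In the context of the \emph{present} paper, I emphasize that I would not reprove Theorem~\ref{bggm1} from scratch; rather I cite it and use only its \textbf{output}: a uniform power $m=m(n)$ after which a rigid conjugate exists. This is exactly what feeds into the reduction strategy already outlined in the Introduction, where Theorem~\ref{bggm1} is combined with the uniqueness of pseudo-Anosov roots from~\cite{gmroots} and the rigid-conjugate algorithm from~\cite{calvez} to assemble the algorithm of Theorem~\ref{algosecond}.
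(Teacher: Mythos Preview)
Your assessment is correct: Theorem~\ref{bggm1} is not proved in this paper at all. It is simply quoted from~\cite{bggm1} and used as a black box in the proof of Theorem~\ref{algosecond}. The paper offers no sketch, no outline, nothing beyond the statement and the citation; so there is no ``paper's own proof'' against which to compare your proposal.

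Your decision to cite the result and use only its output is exactly what the paper does, and is the right call. The conceptual sketch you add is extra material the paper does not contain. A word of caution about that sketch: the actual argument in~\cite{bggm1} is purely Garside-theoretic (based on cycling, the transport map, and a careful analysis of how the initial and final factors of powers interact inside the Ultra Summit Set), and does not invoke invariant measured foliations or laminations in the way you describe. If you intend to include such a sketch, you should either align it with the genuine combinatorial mechanism of~\cite{bggm1} or make clear that it is heuristic motivation rather than the line of proof. For the purposes of the present paper, though, the sketch is unnecessary: a bare citation suffices.
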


We are going to use a second time the Masur-Minsky linear conjugacy bound, by invoking the following result from~\cite{calvez} whose proof relies on this bound. (Recall that~$\mathfrak s$ denotes the cyclic sliding operation -- see Definition~\ref{cyclicsliding}).


\begin{prop}\label{bla}(\cite{calvez}, Theorem 2).
There exists a constant~$C$, depending only on~$n$ and on the chosen Garside structure, with the following property: if~$x\in B_n$ is a pseudo-Anosov braid lying in its own Super Summit Set, and if~$x$ possesses some rigid conjugate, then the conjugate~$\mathfrak s^{C|x|}(x)$ is rigid.
\end{prop}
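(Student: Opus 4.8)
The plan is to show that iterated cyclic sliding reaches a rigid element within $C|x|$ steps; since rigid elements are fixed points of $\mathfrak s$, the element $\mathfrak s^{C|x|}(x)$ is then rigid, as claimed. First I would record the structural facts. Because $x$ lies in its own Super Summit Set and admits a rigid conjugate, the set $SC(x)$ is non-empty and (by the cited Proposition of~\cite{gebhardtgm}) coincides with the set of all rigid conjugates of~$x$; in particular every element of $SC(x)$ is rigid, hence satisfies $\mathfrak p=\iota\wedge\partial(\varphi)=1$ and is fixed by~$\mathfrak s$. Moreover $\mathfrak s$ preserves $SSS(x)$, so the whole trajectory $(\mathfrak s^k(x))_{k\geqslant 0}$ stays in $SSS(x)$. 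Thus it suffices to bound the first time~$T$ at which $\mathfrak s^T(x)$ becomes rigid, since from then on the trajectory is constant.

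The geometric input enters next. Fixing any rigid $\bar x\in SC(x)$, both~$x$ and~$\bar x$ are pseudo-Anosov conjugates, so the Masur--Minsky linear conjugator bound (\cite{masur}, in the form of Theorem~\ref{masurbraids}) supplies a conjugating element~$w$ with $x^w=\bar x$ and $|w|\leqslant c(|x|+|\bar x|)$. Since~$x$ and~$\bar x$ both lie in $SSS(x)$ they share the same infimum and canonical length, so Remark~\ref{remarklength} gives $|\bar x|=|x|$, whence $|w|\leqslant 2c|x|$. Replacing~$\bar x$ by a suitable $\tau$-translate (still rigid and in $SC(x)$) I may assume~$w$ is positive with $\inf(w)=0$; this does not increase~$|w|$, and now $|w|=\ell(w)$.

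The heart of the argument is to convert the length of~$w$ into a bound on~$T$ via transport under cyclic sliding. Write $P_k=\mathfrak p(x)\,\mathfrak p(\mathfrak s(x))\cdots\mathfrak p(\mathfrak s^{k-1}(x))$, the positive element conjugating~$x$ to~$\mathfrak s^k(x)$, and let $w^{(k)}$ be the transport of~$w$ along this trajectory, so that~$w^{(k)}$ conjugates $\mathfrak s^k(x)$ to~$\bar x$. Because~$\bar x$ is rigid we have $\mathfrak p(\bar x)=1$, so the transport formula collapses to $w^{(k+1)}=\mathfrak p(\mathfrak s^k(x))^{-1}w^{(k)}$, that is $w^{(k)}=P_k^{-1}w$. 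The key fact I would invoke is that transport under cyclic sliding sends a positive conjugator between Super Summit elements to a positive conjugator (\cite{gebhardtgm}); inductively each~$w^{(k)}$ is then positive, which is exactly the statement $P_k\preccurlyeq w$. Consequently $P_0\prec P_1\prec\cdots$ is a strictly increasing chain of positive prefixes of~$w$, each step appending the nontrivial simple element $\mathfrak p(\mathfrak s^k(x))$ for as long as $\mathfrak s^k(x)$ is not rigid. Since the braid monoid is homogeneous, such a chain inside~$w$ has length at most a fixed multiple of~$|w|$, hence $T=O(|x|)$; taking~$C$ to absorb the constants (which depend only on~$n$ and on the Garside structure, through~$c$ and the degree of~$\Delta$) finishes the proof.

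The main obstacle is the transport step: one must verify that transport under cyclic sliding sends positive conjugators to positive conjugators in the present situation, equivalently that the cumulative preferred prefix~$P_k$ always divides the Masur--Minsky conjugator~$w$. This is where the canonical, ``leftmost'' nature of cyclic sliding as it converges to the sliding circuit is essential, and it is the only place where the structure theory of~\cite{gebhardtgm} is really used; the remaining steps are bookkeeping with the length function~$|\cdot|$.
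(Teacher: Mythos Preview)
The paper does not prove this proposition at all: it is quoted verbatim as Theorem~2 of~\cite{calvez} and used as a black box. So there is no ``paper's own proof'' to compare against. That said, the paper does tell us (just before stating the proposition) that the argument in~\cite{calvez} ``is based on the linear conjugator bound for pseudo-Anosov elements in mapping class groups due to Masur and Minsky''---exactly the input you use.

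Your reconstruction is essentially the right one and matches the intended strategy. A few remarks on the details. The transport step you flag as the crux is indeed the one nontrivial ingredient, and it \emph{is} available in~\cite{gebhardtgm}: transport under cyclic sliding of a positive conjugator between super summit elements is again positive. With that, your identity $w^{(k)}=P_k^{-1}w$ (using $\mathfrak p(\bar x)=1$ since $\bar x$ is rigid, hence $\mathfrak s(\bar x)=\bar x$) gives $P_k\preccurlyeq w$ for every~$k$. Your termination argument is also correct once made explicit: the additive length homomorphism~$\lambda$ on the positive monoid satisfies $\lambda(P_k)\geqslant k$ (each $\mathfrak p(\mathfrak s^j(x))$ is a nontrivial simple, so contributes at least~$1$) and $\lambda(P_k)\leqslant\lambda(w)\leqslant\lambda(\Delta)\cdot|w|$, so $T\leqslant\lambda(\Delta)\cdot 2c\,|x|$. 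Finally, the reason the sliding trajectory must actually hit a rigid element (rather than cycling among non-rigid ones) is precisely the cited result from~\cite{gebhardtgm} that when a rigid conjugate exists, $SC(x)$ equals the set of rigid conjugates; you invoke this correctly at the outset.
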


This proposition yields a quadratic time algorithm for finding a rigid conjugate~$y$ of any given pseudo-Anosov braid~$x$ satisfying $x\in SSS(x)$, and also for finding a conjugating element, provided a rigid conjugate exists at all.

We are now ready to prove Theorem~\ref{algosecond}.

\begin{proof}[Proof of Theorem~\ref{algosecond}. ]
Let us denote~$\beta(n)$ the upper bound on~$m$ in the statement of Theorem~\ref{bggm1}.
Let~$x,y\in B_n$ be pseudo-Anosov braids. Our aim is to algorithmically find rigid conjugates of~$x^s$ and~$y^s$ for some~$s\in\mathbb N$.

Due to Theorem~\ref{bggm1}, there exist two integers~$i_x$ and~$i_y$, both smaller than~$\beta(n)$, such that~$x^{i_x}$ and~$y^{i_y}$ are conjugate to rigid braids. For all $i=1,\ldots, \beta(n)-1$ simultaneously, our algorithm iterates the operation~$\mathfrak s$ starting from~$x^i$, until a rigid braid is found. The corresponding power~$i_x$ and a braid~$z_x$ such that~$(x^{i_x})^{z_x}$ is rigid are memorized. We denote~$\tilde{x}$ this rigid conjugate of~$x^{i_x}$
The same procedure, applied to~$y$, yields an integer~$i_y$ and braids $z_y$ and~$\tilde y$ with the corresponding properties.

Note that the algorithm so far is doable in time $O(\ell^2)$, where $\ell$ is the maximum of the canonical lengths of~$x$ and~$y$. In order to prove this, we remark that the canonical length of all the braids~$x^i$ and~$y^i$, for $i=1,\ldots, \beta(n)-1$, is bounded above by $\beta(n)\ell$.
By Theorem~\ref{bklbound} and Proposition~\ref{bla}, the number of iterations needed in order to find~$\widetilde{x}$ is linearly bounded with respect to this length $\beta(n)\ell$. Finally, each iteration of the operation~$\mathfrak s$ on a braid of canonical length~$\ell$ takes time~$O(\ell)$(see \cite{gebhardtgmalgo}).


Let $s=lcm(i_x,i_y)$. Since powers of rigid braids are again rigid, $x^s$ and~$y^s$ are conjugate to rigid braids. So all our algorithm has to do now is to calculate~$s$, and output
$\bar x=\tilde x^{\frac{s}{i_x}}$, $\bar y=\tilde y^{\frac{s}{i_y}}$, and $z_1=z_x$, $z_2=z_y$. This satisfies the requirements of~Theorem~\ref{algosecond}.
\end{proof}

\begin{proof}[Proof of Theorem~\ref{T:main}. ] We have to prove that the algorithm described in the introduction is valid and of complexity $O(\ell(x)^3)$. All the necessary ingredients are already at our disposal. Steps~(1) and~(3) are of complexity $O(\ell^2)$, as was shown in~\cite{calvezwiest}. Step~(2) is of complexity $O(\ell^3)$ (see Theorem 1 dans \cite{bggm3}). Step~(4) is of complexity $O(\ell^2)$, by Theorem~\ref{algosecond}. Finally, Theorem~4.11 of~\cite{gebhardtgmalgo} assures us that Algorithm~3 in~\cite{gebhardtgmalgo} correctly solves CDP and CSP for rigid braids of length at most~$\ell$ in time~$O(\ell\cdot \kappa)$, where~$\kappa$ denotes the cardinality of the SC of the input braids. Our Theorem~\ref{cardinalbound} now implies that step~(5) of our algorithm has complexity $O(\ell^3)$. Moreover, step~(5) gives the correct answer, because by~\cite{gmroots}, the relation $\bar x=\bar y^c$ for a braid~$c$ (i.e.\ $(x^s)^{z_1}=((y^s)^{z_2})^c$) is equivalent to the relation $x^{z_1}=y^{z_2 c}$, which is in turn equivalent to~$x$ being conjugate to~$y$ by~$z_2 c z_1^{-1}$.
\end{proof}


\end{document}